\documentclass[12pt]{article}

\usepackage{amsrefs}

\usepackage{amsfonts, amsmath, amssymb, amsthm, color, hyperref, enumitem, fancyhdr, relsize, tikz, graphicx, mathrsfs}
\usepackage[utf8]{inputenc}
\usepackage{lipsum}

\setlist[description]{leftmargin=1cm,labelindent=0.5cm}

\usepackage{caption}
\usepackage{subcaption}

\newtheorem{prop}{Proposition}[section]

\newtheorem{thm}[prop]{Theorem}
\newtheorem{lemma}[prop]{Lemma}

\theoremstyle{definition}
\newtheorem{remark}[prop]{Remark}
\newtheorem{example}[prop]{Example}

\newtheorem{defn}[prop]{Definition}

\newcommand{\R}{\mathbb{R}}
\newcommand{\C}{\mathbb{C}}

\newcommand{\bS}{\mathbb{S}}

\renewcommand{\Im}{\operatorname{Im}}
\renewcommand{\Re}{\operatorname{Re}}

\renewcommand{\H}{{\mathcal H}^{N-1}}
\renewcommand{\L}{{\mathscr L}}

\newcommand{\diag}{\operatorname{diag}}

\newcommand{\dist}{\operatorname{dist}}

\newcommand{\Tr}{\operatorname{Tr}}
\newcommand{\bp}{\begin{pmatrix}}
\newcommand{\ep}{\end{pmatrix}}

\newcommand{\A}{{\mathcal A}}

\renewcommand{\H}{{\mathcal H}}

\newcommand{\dark}{}

\title{\scshape 
Applications of the Projection Characterization and Symmetry Bootstrap for Operators that are Nearby Commuting Operators
\sffamily }
\author{\scshape \sffamily David Herrera}

\newcommand{\Addresses}{{
  \bigskip
  \footnotesize

  D.~Herrera, \textsc{Department of Mathematics, Rowan College at Burlington County,
     Mount Laurel, New Jersey 08054 }\par\nopagebreak
  \textit{E-mail address}, D.~Herrera: \texttt{dherrera@rcbc.edu}

}}

\begin{document}
\dark
\large

\maketitle

\abstract{This paper applies the Projection Characterization and the Symmetry Bootstrap for two operators $A, B$ (with $A$ unitary or self-adjoint) nearby commuting operators $A', B'$ (with $A'$ unitary or self-adjoint) from \cite{herrera2024projection} to problems related to structured almost commuting unitaries, self-adjoint operators that almost anti-commute, and unitaries that almost commute up to a rational phase.

We prove several new stability theorems for these types of relations for matrices. We also provide estimates for how close the constructed operators are for several known stability theorems that did not previously have asymptotic estimates.
}

\section{Introduction}

Lin's Theorem in the case of matrices states:
\begin{thm}(\cite{Lin1996almost})
There is a function $\epsilon(\delta
)$ with $\lim_{\delta \to 0^+}\epsilon(\delta)=0$ such that for all $n$ and all self-adjoint contractions $X, Y \in M_n(\C)$, there exist commuting self-adjoint $X', Y' \in M_n(\C)$ such that
\[\|X'-X\|, \|Y'-Y\| \leq \epsilon(\|[X,Y]\|),\]
where $[X,Y]=XY-YX$ is the commutator of $X$ and $Y$ and $\|-\|$ is the operator norm.
\end{thm}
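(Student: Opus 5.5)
The plan is to argue by contradiction and move the problem into a corona-type C$^*$-algebra, following the strategy of Friis and R{\o}rdam rather than Lin's original hands-on construction. Note first that the statement is equivalent to a normal-approximation statement. With $T = X+iY$ one has $\|[T^*,T]\| = 2\|[X,Y]\|$. If $T'$ is normal with $\|T'-T\|$ small, then $X' = \Re T'$ and $Y'=\Im T'$ are commuting self-adjoint matrices with $\|X'-X\|,\|Y'-Y\| \le \|T'-T\|$. So it suffices to show the following: for every $\epsilon>0$ there is $\delta>0$ such that every $T\in M_n(\C)$ with $\|T\|\le 2$ and $\|[T^*,T]\|<\delta$ lies within $\epsilon$ of a normal matrix, uniformly in $n$. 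The function $\epsilon(\delta)$ is then obtained as an infimum over such admissible pairs.

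\textbf{Reduction to a lifting problem.} Suppose the statement fails. Then there are $\epsilon_0>0$ and matrices $T_k\in M_{n_k}(\C)$ with $\|T_k\|\le 2$ and $\|[T_k^*,T_k]\|\to 0$, but $\dist(T_k,\text{normals})\ge \epsilon_0$ for every $k$. Set
\[
\A=\prod_k M_{n_k}(\C), \qquad J=\bigoplus_k M_{n_k}(\C), \qquad \pi:\A\to\A/J .
\]
Then $t=\pi((T_k))$ is a normal element of $\A/J$. If we can show that $t$ lifts to a normal element $(N_k)\in \A$, we get $\|N_k-T_k\|\to 0$, which is a contradiction. The proof is therefore reduced to lifting normal elements from $\A/J$ to $\A$, and we may do this approximately: it is enough to approximate $t$ within $\epsilon_0/2$ by normal elements that do lift.

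\textbf{Step 1 (lifting finite-spectrum normals).} Let $s=\sum_{j=1}^m \lambda_j p_j$ with $p_j$ mutually orthogonal projections in $\A/J$. Such an element lifts to a normal element. Indeed, orthogonal families of projections lift from $\A/J$ to orthogonal families of projections in $\A$: lift one projection at a time by functional calculus applied to a self-adjoint lift, working coordinatewise in the matrices and cutting down by the complement of the projections already lifted. Then $\sum_j \lambda_j P_j$ is a normal lift of $s$.

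\textbf{Step 2 (approximation by finite-spectrum normals) --- the main obstacle.} It remains to show that the normal element $t$ is a norm limit of normal elements of $\A/J$ with finite spectrum. The plan is to invoke the structure theory for real rank zero algebras, namely Lin's criterion and its refinement by Friis--R{\o}rdam: in a C$^*$-algebra of real rank zero and stable rank one, a normal element is approximable by normals with finite spectrum provided that $t-\lambda$ represents the trivial class in $K_1$ for every $\lambda\notin\sigma(t)$.

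\begin{itemize}
\item \emph{Real rank zero and stable rank one.} I would check that $\A/J$ has both properties. They pass from $\A$ to the quotient, and $\A$ has them coordinatewise with uniform control: self-adjoint matrices have finite spectrum, and every matrix is a limit of invertibles with uniform estimates.
\item \emph{Vanishing of the $K_1$ obstruction.} For $\lambda\notin\sigma(t)$, the element $t-\lambda$ is invertible in $\A/J$. By stable rank one it lifts to an invertible element $(S_k)$ of $\A$, after a small perturbation controlled in norm. Each $S_k$ lies in $GL_{n_k}(\C)$, which is connected, and a path to the identity can be chosen with uniformly bounded length by using polar decomposition and logarithms of unitaries. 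Hence $t-\lambda$ lies in the identity component of $GL(\A/J)$, so its $K_1$-class vanishes.
\end{itemize}

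Verifying this uniform connectedness, and applying the approximation theorem correctly (it is itself the deep input, replacing the explicit combinatorial construction in Lin's original argument), is where I expect essentially all of the difficulty to lie. With Step 2 in place, Step 1 lifts the approximants and the contradiction follows.

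Finally, this contradiction argument gives no explicit rate for $\epsilon(\delta)$; it only shows that $\epsilon(\delta)\to 0$ as $\delta\to 0^+$. That is exactly what the statement asserts.
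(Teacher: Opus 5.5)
The paper gives no proof of this statement. It is quoted from Lin as background, and what the paper actually uses downstream is the quantitative Kachkovskiy--Safarov bound (\ref{KS}). So there is nothing in the paper to match, and your outline, the Friis--R{\o}rdam corona-algebra route, has to stand on its own.

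Your reductions are sound:
\begin{itemize}
\item the passage to $T=X+iY$;
\item the contradiction in $\A/J$ (the quotient norm is $\limsup_k\|\cdot\|$, so approximate normal lifts suffice);
\item the lifting of finite-spectrum normals;
\item the transfer of real rank zero and stable rank one to the quotient;
\item the lifting of invertibles, which follows from density of $GL(\A)$ together with openness of the cosets of $GL_0(\A/J)$.
\end{itemize}
Be aware, though, that Step 2 for $\A/J$ is not a lemma on the way to the theorem: it is equivalent to it. A normal lift $(N_k)$ is a limit of uniformly finite-spectrum normals, obtained by rounding eigenvalues. So your outline is exactly as complete as the citation in Step 2.

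That citation is misstated. Knowing $[t-\lambda]=0$ only for $\lambda\notin\sigma(t)$ does not suffice in a real rank zero, stable rank one algebra. A necessary condition is $\lambda-t\in\overline{GL_0(A)}$ for \emph{every} $\lambda\in\C$, because each $\lambda-\sum_j\mu_jp_j$ is a limit of elements of $GL_0(A)$. This condition can fail at points inside the spectrum. For example, take:
\begin{itemize}
\item $B$ an irrational rotation algebra;
\item $w\in B$ a unitary with $[w]\neq 0$ in $K_1(B)$;
\item $d\in B$ normal with $\sigma(d)=\overline{\mathbb D}$;
\item $A=B\otimes(\mathcal K+\C 1)$, which has real rank zero and stable rank one (cut down by $1\otimes P$ for finite-rank projections $P$).
\end{itemize}
Then $x=w\otimes(1-e_{11})+d\otimes e_{11}$ is normal with $\sigma(x)=\overline{\mathbb D}$, so your hypothesis holds vacuously. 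Yet its image in $A/(B\otimes\mathcal K)\cong B$ is $w\notin\overline{GL_0(B)}$, so $x$ is not a limit of finite-spectrum normals.

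In $\A/J$ this obstruction is absent, and your own computation shows why. The lifting argument applies to every invertible element of $\A/J$, not just to $t-\lambda$. It therefore gives $GL(\A/J)=GL_0(\A/J)$, and by stable rank one $\overline{GL_0(\A/J)}=\A/J$. The same holds in every corner $p(\A/J)p\cong\prod_kM_{m_k}/\bigoplus_kM_{m_k}$. So you should cite one of the following, rather than the version you wrote:
\begin{itemize}
\item the approximation theorem in a form whose hypotheses are these, verified as above; or
\item the statement for $\prod_kM_{n_k}/\bigoplus_kM_{n_k}$ itself (every normal element is a norm limit of normals with finite spectrum).
\end{itemize}

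Finally, the two routes buy different things. A compactness argument like yours yields only some unspecified $\epsilon(\delta)\to 0$, which is all the statement asserts. Every estimate in the paper, however, descends from the explicit $\delta^{1/2}$ rate in (\ref{KS}). This includes the $\delta^{1/4}$ of Theorem \ref{NearbyExactRep} and the exponents $1/8$ and $1/16$ later on. Your approach cannot produce that rate, so it could not replace the input the paper actually relies on.
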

This result is non-trivial because the estimate for how close the nearby commuting matrices $X', Y'$ are to the original matrices $X, Y$ does not depend on $n$. 

See \cites{herrera2022constructing, herrera2024constructing} and the references therein for a prehistory of Lin's Theorem. Some notable results are that it is not true that a dimensional-independent estimate for nearby commuting matrices can be found for certain examples, such as three or more almost commuting matrices (\cite{
voiculescu1981remarks, davidson1985almost, choi1988almost}) or for more general types of almost commuting matrices, such as a self-adjoint and normal matrix (\cite{davidson1985almost}) or two almost commuting unitaries (\cite{voiculescu1983asymptotically}).

There has been much work on extending Lin's theorem since it was proved, for instance 
\cites{
dor2025almost,
eilers1999morphisms,
enders2019almost,
friis1996almost,
gong1998almost,
hastings2009making,
hastings2011making,
hastings2010almost,
hastings2011topological,
herrera2022constructing,
herrera2020hastings,
herrera2024projection,
kachkovskiy2016distance,
li2022vector,
lin2024almost,
loring1988k,
loring2015k,
loring1997lifting,
loring2016almost,
loring2014almost,
loring2013almost,
ogata2013approximating
}. 
Some of these generalizations involve extending the types of $C^\ast$-algebras in which which there exist nearby commuting elements, obtaining explicit estimates for $\epsilon(\delta)$, or extending Lin's theorem to different types of almost commuting operators when necessary obstructions are not present.

A very notable example is that Kachkovskiy and Safarov (\cite{kachkovskiy2016distance}) showed that there is a constant $C_{KS}$ so that we can choose $X', Y'$ in Lin's theorem to satisfy
\begin{equation}\label{KS}
\|X'-X\|, \|Y'-Y\| \leq C_{KS}\|[X,Y]\|^{1/2}.
\end{equation}

Another notable example that much of our paper is devoted to extending is the result that almost commuting unitaries for which the necessary obstruction vanishes are nearby commuting unitaries (\cite{eilers1999morphisms}, \cite{gong1998almost}).  Recently in 2025, Dor-On, Hall, and Kachkovskiy in \cite{dor2025almost} provided the estimate
\begin{equation}\label{Dor-On Unitary Ineq}
\|U'-U\|, \|V'-V\| \leq Const. \|[U,V]\|^{2/15}.
\end{equation}

The purpose of this paper is to explore several applications of the Projection Characterization and the Symmetry Bootstrap of operators that are nearby commuting operators from our 2024 paper
\cite{herrera2024projection}. We prove the following extension of the above almost commuting unitary result:
\begin{thm}\label{unitary thm}
If $U, V \in M_n(\C)$ are unitaries that have a Bott index equal to zero (see: Definition \ref{bott index}) then there are commuting unitaries $U', V' \in M_n(\C)$ such that
\[\|U'-U\|, \|V'-V\| \leq Const. \|[U,V]\|^{1/8}.\]

Moreover, if $U, V$ have additional structure as in one of the following cases then $U', V'$ can be chosen with that same structure. Let $W\in M_n(\C)$ be unitary, $m \geq 2$ be an integer, $\zeta, \eta \in \C$ in the unit circle, and the value of $Const.$ depend on $m$:
\begin{description} 
\item[\underline{$(\zeta,\eta)$}:] Suppose further that $\zeta$ has order $m$ (as an element of the multiplicative group $\C^\times$) and $W^m$ is a multiple of $I$.\\
Then if $WUW^\ast=\zeta U$ and $WVW^\ast=\eta V$ (resp. $WVW^\ast=\eta V^\ast$) then $U', V'$ can be chosen to also satisfy $WU'W^\ast=\zeta U'$ and $WV'W^\ast=\eta V'$ (resp. $WV'W^\ast=\eta (V')^\ast$). 
\item[\underline{$(\zeta,\eta)$T}:] Suppose further that $\zeta$ has order $m$ even and $(\overline{W}W)^{m/2}$ is a multiple of $I$.\\
Then if $WUW^\ast=\zeta U^T$ and  $WVW^\ast=\eta V^T$ (resp. $WVW^\ast=\eta \overline{V}$) then $U', V'$ can be chosen to also satisfy $WU'W^\ast=(U')^T$ and  $WV'W^\ast=\eta (V')^T$ (resp. $WV'W^\ast=\eta \overline{V'}$). 
\item[\underline{$(\zeta,\eta)$C}:] Suppose further that $m=2$ and $\overline{W}W$ is a multiple of $I$.\\
Then if $WUW^\ast=\zeta \overline{U}$ and $WVW^\ast=\eta \overline{V}$ (resp. $WVW^\ast=\eta V^T$) then $U', V'$ can be chosen to also satisfy $WU'W^\ast=\zeta \overline{U'}$ and  $WV'W^\ast=\eta \overline{V'}$ (resp. $WV'W^\ast=\eta (V')^T$). 
\item[\underline{$(\zeta,\eta)$A}:] Suppose further that $m=2$ and $W^2$ is a multiple of $I$. 

Then if $WUW^\ast=\zeta U^\ast$ and  $WVW^\ast=\eta V^\ast$ (resp. $WVW^\ast=\eta V$) then $U', V'$ can be chosen to also satisfy $WU'W^\ast=\zeta (U')^\ast$ and  $WV'W^\ast=\eta (V')^\ast$ (resp. $WV'W^\ast=\eta V'$). 
\end{description}
\end{thm}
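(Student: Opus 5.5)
The plan is to reduce to the Projection Characterization of \cite{herrera2024projection}. For a unitary $A$ and arbitrary $B$, that result says: $A,B$ are nearby commuting operators $A',B'$ with $A'$ unitary exactly when there are spectral-type projections $P_j$ for $A$, associated with a partition of the circle into arcs of length about $\varepsilon$, that almost commute with $B$. Quantitatively, if $\|[P_j,B]\|$ is small uniformly in $j$, we may take $A'=\sum_j \lambda_j P_j$ and $B'=\sum_j P_jBP_j$, suitably unitarized. The Symmetry Bootstrap then upgrades this: if the symmetry $\Phi$ (one of $X\mapsto WXW^\ast$ composed with identity, transpose, conjugate or adjoint) maps the data to itself up to the phases $\zeta,\eta$, then $A',B'$ can be averaged or corrected to satisfy the same relation, at the cost of a constant depending on $m$.

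\textbf{Step 1 (non-structured case).} Apply the Kachkovskiy--Safarov estimate \eqref{KS} to a self-adjoint logarithm-type reduction. Concretely, I would first write $U$ in terms of a self-adjoint $H$ and use $V$ to form a matrix whose near-commutation with $H$ is controlled by $\|[U,V]\|$. Vanishing of the Bott index is exactly what lets us choose the spectral cut of $U$, that is, a gap or a projection $P$ almost commuting with $V$, compatibly with $V$. The goal of this step is a family of projections $P_j$, functions of $U$ up to an error, with
\[\|[P_j,V]\|\lesssim \|[U,V]\|^{1/4}.\]
Feeding this into the Projection Characterization with arc length $\varepsilon\sim\|[U,V]\|^{1/8}$ balances the two errors, namely the error from replacing $U$ by $\sum_j\lambda_jP_j$ and the compression error $\|[P_j,V]\|/\varepsilon$-type terms. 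This yields the exponent $1/8$.

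\textbf{Step 2 (structured cases).} In each case $(\zeta,\eta)$, $(\zeta,\eta)$T, $(\zeta,\eta)$C, $(\zeta,\eta)$A, the symmetry $\Phi$ maps the spectrum of $U$ by $\lambda\mapsto\zeta\lambda$ (or $\zeta\bar\lambda$), generating a finite group of order $m$ or $2m$. The hypotheses on $W^m$, $(\overline{W}W)^{m/2}$, $\overline{W}W$, or $W^2$ being scalar ensure that $\Phi$ is an honest action of this finite group. I would choose the arc partition to be invariant under this rotation or reflection group, so that $\Phi$ permutes the $P_j$. Then I would apply the Symmetry Bootstrap, averaging $B'$ over the group and correcting $A'$ by the same invariant functional calculus, to obtain $U',V'$ with the required structure. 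Averaging costs at most a factor of order $m$ in the constant.

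The main obstacle I expect is Step 1. It requires producing projections that almost commute with $V$ with the sharp $1/4$ rate, using only the vanishing of the Bott index, while also making the choice $\Phi$-equivariant. The Bott index obstruction must vanish in an equivariant way, and the invariant spectral cut might not exist for arbitrary arcs. To handle this, I would first try to place the cut in a spectral gap of $U$ created by a small perturbation, and then symmetrize that perturbation itself via the Bootstrap before constructing the $P_j$.
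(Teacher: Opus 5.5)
\textbf{There is a genuine gap in Step 1.} Step 1 carries essentially all of the content of the theorem, and the mechanism you sketch there does not work.

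Writing $U=e^{iH}$ needs a branch cut, and $H=-i\log U$ does not almost commute with $V$ unless $U$ has a spectral gap at the cut, since the argument function is discontinuous there. A general unitary has no gap. Opening one by a \emph{small} perturbation that keeps $\|[U,V]\|$ small is not a preliminary step but essentially the whole problem: it is impossible for Voiculescu's unitaries. So this is exactly where the vanishing of the Bott index must be used quantitatively, and your proposal gives no way of doing so.

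Even granting a gap, \eqref{KS} is a statement about two self-adjoint contractions. You would instead be left with $H$ and the unitary $V$, effectively the three self-adjoints $H,\Re V,\Im V$, to which Kachkovskiy--Safarov does not apply. So the rate $\|[P_j,V]\|\lesssim\|[U,V]\|^{1/4}$ is reverse-engineered from the target exponent rather than derived. As you note yourself, this is the main obstacle, and the proposal leaves it open.

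\textbf{The missing idea is to localize the index and pass through the sphere.}

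\emph{Localization.} Let $E$ be the spectral projection of $V$ for a closed half circle. By Lemma~\ref{index localization}, the signature of $B(U,V)$ equals that of $(E\oplus E)B(U,V)(E\oplus E)$. On $1-E$ one has $h(V)=0$, so that corner is a sum of trace-zero $2\times 2$ blocks of signature zero. The coupling between the two corners is $O(\|[U,V]\|)$, because the support arc of $h$ is at positive distance from the complementary arc.

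\emph{Sphere.} On $R(E)$, the operators $H_1=f(V)E$ and $H_2+iH_3=\frac12E\{U,h(V)\}E$ form an $O(\|[U,V]\|)$-almost representation of the sphere with zero Bott index. Geometrically, half of the torus is a cylinder, i.e.\ a sphere minus two polar caps. Theorem~\ref{NearbyExactRep} then gives an exact representation within $\varepsilon=Const.\|[U,V]\|^{1/4}$; this is where the $1/4$ comes from.

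\emph{Projections.} Take an arc $\Omega$ of length about $L$ on which $f$ is bi-Lipschitz and $h$ is bounded below. Applying \cite[Lemma 6.8]{herrera2024projection} to the exact representation produces projections $F$ sandwiched between spectral projections of $H_1$ (which are spectral projections of $V$) that almost commute with $H_2+iH_3$. On that arc $F(H_2+iH_3)\approx h_\Omega FU$ and $(H_2+iH_3)F\approx h_\Omega UF$, so $\|[F,U]\|\lesssim \varepsilon/L+L+\|[U,V]\|$. Rotating $V$ makes this uniform over arcs centered anywhere. The Projection Characterization with $L=\varepsilon^{1/2}$ then yields the exponent $1/8$.

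\textbf{On Step 2.} Your idea of letting the symmetry permute the projections is reasonable once Step 1 is in place. Attach the projections to the unitary whose phase has full order $m$. The rotation then acts freely on an invariant set of cuts, and reflections act freely on cuts placed away from their fixed points. Hence $\sum_jP_jVP_j$ is automatically equivariant, and neither averaging nor an equivariant index is needed. The paper instead attaches the projections to $V$ and handles the phases through \cite[Lemmas 6.4, 6.6]{herrera2024projection}. None of this substitutes for the missing Step 1.
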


\begin{remark}
Recall that as shown in \cite{hastings2010almost}, two unitary matrices that are real are guaranteed to have nearby commuting unitaries because the Bott index vanishes. It was not until \cite{loring2014almost} that it was shown that the nearby unitaries can be chosen to be real. 

This is now a direct consequence of our bootstrap theorem because a real unitary is conjugation-symmetric. That is, it is a consequence of the above result with \underline{(1,1)\textbf{C}} above with $\zeta=\eta=1$ and $W = I$. Moreover, we obtain an asymptotic estimate whereas before there was none.

We also obtain a similar result for two almost commuting symplectic unitary matrices since as explained in \cite[Example 2.4(8)]{herrera2024projection} they satisfy the \underline{(1,1)\textbf{C}} symmetry with $\zeta=\eta=1$ and a certain real unitary $W$ that satisfies $W^2=-I$. 
\end{remark}

We now discuss some new results of a similar type that we obtain for almost anti-commuting self-adjoint matrices and unitaries that almost commute up to a rational phase.

Hua and Lin in \cite{hua2015rotation} studied unitaries in an abstract setting that nicely generalizes unitary matrices that almost commute up to a phase. They show that when the appropriate obstruction vanishes for unitaries $U$ and $V$ that satisfy $UV \approx e^{2\pi i\theta} VU$, there are nearby unitaries $U', V'$ that exactly satisfy $U'V'= e^{2\pi i\theta} V'U'$.
There are some extensions of this to more than two unitaries in \cite{hua2021stability} subject to certain restrictions.

In Theorem \ref{almost commuting unitaries up to a phase}, we extend the result in \cite{hua2015rotation} by providing a construction for the unitaries and an estimate for how close they may be when the obstruction vanishes:
\begin{equation}\label{phaseKS}
\|U'-U\|, \|V'-V\| \leq C_\theta\,\|UV-e^{2\pi i \theta}VU\|^{1/16},
\end{equation}
where $\theta$ is a rational number and $C_\theta$ depends on the denominator of $\theta$. We prove this result by simply applying our result constructing commuting unitaries and a symmetry bootstrap to an associated pair of structured block unitary matrices that approximately commute. 

\vspace{0.1in}

Also, because of the connection between almost commuting unitaries and almost commuting unitaries up to a rational phase that we prove in Theorem \ref{Up to Phase Bootstrap}, we could just prove stability results of rational phase almost commutativity immediately from other known results.

For instance, we could obtain a similar version of (\ref{phaseKS}) above by simply applying our Theorem \ref{Up to Phase Bootstrap} to (\ref{Dor-On Unitary Ineq}).
Another example is that we apply our Theorem \ref{Up to Phase Bootstrap} to (\ref{KS}) to prove the following result which generalizes \cite{pedersen1998stability} by providing estimates:
\begin{thm}
There is a universal constant $C> 0$ such that if $X, Y\in M_n(\C)$ are self-adjoint contractions, then there are anti-commuting $X', Y'$ self-adjoint contractions with
\[\|X'-X\|, \|Y'-Y\| \leq C\|XY+YX\|^{1/8}.\]
\end{thm}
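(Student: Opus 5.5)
The plan is to reduce anti-commutation to commutation through a structured block matrix, and then use the symmetry bootstrap of \cite{herrera2024projection} (in the form packaged as Theorem \ref{Up to Phase Bootstrap}) together with the Kachkovskiy--Safarov estimate (\ref{KS}). The exponent $1/8$ suggests the following bookkeeping: (\ref{KS}) contributes $\delta^{1/2}$, and the projection characterization/bootstrap loses a further fourth root, since it turns an $\epsilon$ for nearby commuting operators into an estimate like $\epsilon^{1/4}$ for the structured ones.

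\textbf{Step 1: build a commuting pair.} Set $\delta=\|XY+YX\|$ and $W=\diag(I,-I)\in M_{2n}(\C)$. Anti-commutation $XY=-YX$ is a phase-$(-1)$ relation, i.e.\ rational phase $\theta=1/2$. To remove the phase, I would form the self-adjoint block matrices
\[\tilde X=\bp X&0\\0&-X\ep,\qquad \tilde Y=\bp 0&Y\\Y&0\ep.\]
A direct computation gives
\[[\tilde X,\tilde Y]=\bp 0& XY+YX\\ -(XY+YX)&0\ep,\]
so $\|[\tilde X,\tilde Y]\|=\delta$. The pair also has the $\mathbb Z_2$ symmetries $W\tilde XW^\ast=\tilde X$ and $W\tilde YW^\ast=-\tilde Y$. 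A second unitary $S=\bp 0&I\\I&0\ep$ satisfies $S\tilde XS^\ast=-\tilde X$ and $S\tilde YS^\ast=\tilde Y$.

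\textbf{Step 2: commuting approximants.} Apply (\ref{KS}) to get commuting self-adjoint contractions $\tilde X',\tilde Y'$ within $C_{KS}\delta^{1/2}$ of $\tilde X,\tilde Y$; if needed, compress them to contractions by functional calculus at constant cost. Then apply the symmetry bootstrap (the self-adjoint version underlying Theorem \ref{Up to Phase Bootstrap}) so that $\tilde X',\tilde Y'$ also respect the two symmetries, one of type \underline{$(\zeta,\eta)$} with $\zeta=1$, $\eta=-1$. This step costs a power, giving distance $\lesssim\delta^{1/8}$.

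\textbf{Step 3: read off the anti-commuting pair.} The symmetries force the block forms $\tilde X'=\diag(X_1,-X_1)$ and $\tilde Y'=\bp 0&Y_1\\Y_1&0\ep$ with $X_1,Y_1$ self-adjoint. Compressing to blocks does not increase norms, so $\|X_1-X\|,\|Y_1-Y\|\le C\delta^{1/8}$, and $X_1,Y_1$ are contractions. Commutation $[\tilde X',\tilde Y']=0$ is exactly $X_1Y_1+Y_1X_1=0$.

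\textbf{Main obstacle.} Step 2 is the hard part: we must check that the bootstrap from \cite{herrera2024projection} applies to self-adjoint pairs carrying an anti-symmetry $W\tilde YW^\ast=-\tilde Y$, that it handles two symmetries at once, and that its loss is exactly a fourth root. If simultaneous symmetrization is unavailable, the fallback is to impose only the $W$-symmetry. That already gives $\tilde X'=\diag(X_1,X_2)$ and off-diagonal $\tilde Y'$ with blocks $Y_1$ and $Y_1^\ast$. We would then average with $S$ via $\tfrac12(\,\cdot\,+\,\text{conjugate})$, which preserves the symmetries, and would need to check that it preserves commutation. Failing that, we would use $\theta=1/2$ directly in Theorem \ref{Up to Phase Bootstrap} after passing to the unitaries $e^{i\pi X}$-type Cayley transforms, though this would likely change the exponent.
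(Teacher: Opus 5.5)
Your Steps 1 and 3 are exactly the paper's reduction: its $\mathcal X,\mathcal Y$ and $\varphi_X,\varphi_Y$ are your $\tilde X,\tilde Y$ and conjugation by $W$, $S$. Your Step 2, however, differs from the written proof.

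\textbf{How the paper does Step 2.} The paper never bootstraps the self-adjoint pair. It forms the almost normal $\mathcal N=\tilde X+i\tilde Y$, with $\|[\mathcal N^\ast,\mathcal N]\|=2\|XY+YX\|$. It notes that your two $\mathbb Z_2$ conditions read $W\mathcal NW^\ast=\mathcal N^\ast$ and $S\mathcal NS^\ast=-\mathcal N^\ast$. These form a dihedral pair of symmetries: conjugation by $WS$ sends $\mathcal N\mapsto-\mathcal N$, and the conjugate-linear anti-multiplicative map $A\mapsto (WAW^\ast)^\ast$ fixes $\mathcal N$. The paper then applies Lin's theorem with dihedral symmetries (Theorem 8.2 of \cite{herrera2024projection}) once, and $\Re\mathcal N'$, $\Im\mathcal N'$ have your block forms.

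\textbf{Your route.} You use (\ref{KS}) followed by the self-adjoint analogue of the two-pass argument of Theorem \ref{Up to Phase Bootstrap}. With $m=2$ and $\omega=-1$ one has $\mathcal W_\omega=W$ and $\mathcal W_m=S$, and $\tilde X,\tilde Y$ are exactly the $m=2$ block forms (\ref{unitaryBlock}). This is the alternative the paper itself mentions in Example \ref{anti-commuting} and in the introduction. It gives the same count $\tfrac12\cdot\tfrac12\cdot\tfrac12=\tfrac18$. The dihedral packaging needs one application of one black box and also accommodates anti-multiplicative symmetries in $S$ (Theorem \ref{anti-commuting thm}). Your version stays closer to the unitary argument and makes visible where each square root is lost.

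\textbf{What must be fixed in Step 2.} As written, Step 2 is not set up correctly.
\begin{itemize}
\item The bootstrap does not impose both symmetries ``at once''.
\item It cannot impose $W$ with $\tilde X$ as the spectral operator. $\tilde X$ has phase $1$ under $W$, and that phase does not have the order of the symmetry. Compare the hypothesis ``$\zeta$ has order $m$'' in Theorem \ref{unitary thm}, and Example \ref{Linear Symmetry Fail for Unitaries}. So your label $(\zeta,\eta)=(1,-1)$ is the wrong way round.
\end{itemize}
You need two passes with the roles swapped:
\begin{enumerate}
\item Impose $S$ using $\tilde X$ as the spectral operator ($S\tilde XS^\ast=-\tilde X$, $S\tilde YS^\ast=\tilde Y$). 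This takes $\delta^{1/2}$ to $\delta^{1/4}$.
\item Impose $W$ using $\tilde Y$ as the spectral operator ($W\tilde YW^\ast=-\tilde Y$, $W\tilde XW^\ast=\tilde X$), keeping the $S$-structure. This takes $\delta^{1/4}$ to $\delta^{1/8}$.
\end{enumerate}

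\textbf{Drop the averaging fallback.} It does not preserve commutation. Suppose $\diag(X_1,X_2)$ commutes with $\bp 0&Y_1\\ Y_1^\ast&0\ep$, i.e.\ $X_1Y_1=Y_1X_2$. Then the averaged pair commutes iff $X_1-X_2$ anticommutes with $Y_1+Y_1^\ast$, i.e.\ iff $X_1Y_1^\ast+Y_1X_1=X_2Y_1+Y_1^\ast X_2$. This fails, for example, for $Y_1=\diag(1,i)$, $X_1=\bp 0&1\\1&0\ep$, $X_2=Y_1^\ast X_1Y_1$. The Cayley-transform fallback is unnecessary.
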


Another generalization we obtain as a corollary of Loring and S{\o}rensen's proof of Lin's theorem for real or self-dual matrices in \cite{loring2016almost} is the anti-commuting version of that result:
\begin{thm}
There is a function $\epsilon(\delta
)$ with $\lim_{\delta \to 0^+}\epsilon(\delta)=0$ such that for all $n$ and all real (resp. self-dual) self-adjoint contractions $X, Y \in M_n(\C)$, there exist anti-commuting real (resp. self-dual) self-adjoint $A', B' \in M_n(\C)$ such that
\[\|X'-X\|, \|Y'-Y\| \leq \epsilon(\|XY+YX\|).\]
\end{thm}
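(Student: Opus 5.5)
The plan is to reduce the anti-commuting problem to Loring–S{\o}rensen's real/self-dual version of Lin's theorem from \cite{loring2016almost}, using the bootstrap between almost anti-commutation and almost commutation up to the phase $-1$ (Theorem \ref{Up to Phase Bootstrap} with $\theta=1/2$).

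\textbf{Step 1: pass to a pair of almost commuting self-adjoint matrices.} Given real (resp.\ self-dual) self-adjoint contractions $X,Y$ with $\|XY+YX\|=\delta$, I would form the doubled matrices
\[\tilde X=\bp X & 0\\ 0 & -X\ep,\qquad \tilde Y=\bp 0 & Y\\ Y & 0\ep\]
in $M_{2n}(\C)$. A direct computation gives
\[[\tilde X,\tilde Y]=\bp 0 & XY+YX\\ -(XY+YX) & 0\ep,\]
so $\|[\tilde X,\tilde Y]\|=\delta$. These matrices are still self-adjoint contractions. They are real when $X,Y$ are real. In the self-dual case they are self-dual for the doubled structure $\diag(J,J)$, which still satisfies $\overline{\mathcal J}\mathcal J=-I$.

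They also have a $\Z_2$ symmetry $W=\diag(I,-I)$: we have $W\tilde XW^\ast=\tilde X$ and $W\tilde YW^\ast=-\tilde Y$. Moreover $W$ is real, $W^2=I$, and $W$ commutes with the real or symplectic structure. So we are in the \underline{$(1,-1)$} structured setting, combined with the \textbf{C} symmetry.

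\textbf{Step 2: apply the real/self-dual Lin theorem together with the symmetry bootstrap.} By \cite{loring2016almost} there are commuting real (resp.\ self-dual) self-adjoint $X'',Y''$ within $\epsilon_{LS}(\delta)$ of $\tilde X,\tilde Y$. Next I would invoke the Projection Characterization and Symmetry Bootstrap of \cite{herrera2024projection} to make $X'',Y''$ also respect the $W$-symmetry while keeping the real/self-dual structure.

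For the $W$-symmetry alone, the naive averages
\[\tfrac12(X''+WX''W^\ast),\qquad \tfrac12(Y''-WY''W^\ast)\]
are the obvious candidates, but they need not commute. This is exactly where the bootstrap is required.

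Because $W$ is real and commutes with $J$, the group generated by $W$ and the antiunitary structure is finite and abelian, so both symmetries can be imposed simultaneously. This compatibility check is the main obstacle. I expect to verify it by checking that the hypotheses of the bootstrap, $W^2=I$ and $\overline{W}W=I$, hold for the combined group. The result is commuting $X'',Y''$ with $WX''W^\ast=X''$, $WY''W^\ast=-Y''$, of the same reality type, and within $\epsilon(\delta)$ of $\tilde X,\tilde Y$.

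\textbf{Step 3: extract the anti-commuting pair.} The symmetry forces the block forms
\[X''=\diag(A,D),\qquad Y''=\bp 0 & B\\ B^\ast & 0\ep.\]
Commutation then gives $AB=BD$ and $DB^\ast=B^\ast A$. Since $X''$ is only close to $\diag(X,-X)$, I would not extract the pair by compressing to the top-left corner. Instead I would use a second symmetry that swaps the two blocks,
\[S=\bp 0 & I\\ I & 0\ep,\qquad S\tilde XS^\ast=-\tilde X,\qquad S\tilde YS^\ast=\tilde Y,\]
and include $S$ in the bootstrap group. The group $\langle W,S\rangle$ realizes the $\theta=1/2$ case of Theorem \ref{Up to Phase Bootstrap}.

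With the $S$-symmetry imposed as well, we get $D=-A$ and $B=B^\ast$. Then $X'=A$ and $Y'=B$ are real (resp.\ self-dual) self-adjoint matrices with $AB=-BA$, and each lies within $\epsilon(\delta)$ of $X$ and $Y$ respectively. Finally I would apply functional calculus, truncating the spectrum to $[-1,1]$, to make them contractions. This truncation preserves both anti-commutation and the reality type, because the cutoff function is odd and real, and it at most doubles the error.

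Since $\epsilon_{LS}(\delta)\to0$ and the bootstrap estimates are uniform in $n$, the resulting $\epsilon(\delta)$ tends to $0$ as $\delta\to0^+$.
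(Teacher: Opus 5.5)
Your argument is correct in outline, but it is not the route the paper takes. It is essentially the alternative the paper mentions in Example \ref{anti-commuting} and then sets aside: apply the self-adjoint symmetry bootstrap \cite[Theorem 7.1]{herrera2024constructing} once for each of $\varphi_X,\varphi_Y$, as is done for unitaries in Theorem \ref{Up to Phase Bootstrap}.

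The paper uses the same doubling, but packages both sign symmetries into the single almost normal element $\mathcal N=\mathcal X+i\mathcal Y$. The conditions $\varphi_X(\mathcal N)=\mathcal N^\ast$, $\varphi_Y(\mathcal N)=-\mathcal N^\ast$ become the $D_2$ dihedral symmetry $(\varphi_X\circ\varphi_Y)(\mathcal N)=-\mathcal N$, $\varphi_{X\ast}(\mathcal N)=\mathcal N$. The real (resp.\ self-dual) structure enters as the linear anti-multiplicative transpose (resp.\ dual) map, extended to $M_2$ as $\varphi\otimes T$. One application of the dihedral structured Lin theorem \cite[Theorem 8.2]{herrera2024projection} (Theorem \ref{anti-commuting thm} with the transpose) then gives a normal $\mathcal N'$ whose real and imaginary parts already have the forms $\diag(X',-X')$ and $\bp 0&Y'\\ Y'&0\ep$.

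The paper's route buys a single black-box call, with no bookkeeping about which operator supplies the spectral projections, and it works for any admissible collection of TR-rank $1$. Your route needs only \cite{loring2016almost} as input plus the basic bootstrap on a self-adjoint pair, which makes the dependence on Loring--S{\o}rensen transparent. The cost is two sequential applications that must be set up carefully.

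Three points need tightening.
\begin{enumerate}
\item Theorem \ref{Up to Phase Bootstrap} is stated for unitaries and linear multiplicative symmetries, so you are really using its self-adjoint analogue, with the real/self-dual structure carried as a background symmetry. What that requires is that this structure commutes with conjugation by $W$ and by your swap $S$, so the collection is admissible. ``The group is finite and abelian'' is not the relevant criterion.
\item Each application of the bootstrap (case (ii) of Lemma \ref{UtoHtoU} and its self-adjoint analogue) imposes one new linear phase symmetry. That phase must sit on the operator that is \emph{not} the source of spectral projections, with order equal to that of the symmetry. Hence:
\begin{itemize}
\item the $W$-step must take its projections from $\tilde X$, with $\tilde Y$ carrying the phase $-1$;
\item the swap step must take them from $\tilde Y$, with $\tilde X$ carrying $-1$.
\end{itemize}
The swap step uses the commuting, $W$-symmetric, real/self-dual pair from the first step as its input approximants and keeps $W$ as a background phase symmetry. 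This is exactly how $\varphi_m$ is carried through the second step of the proof of Theorem \ref{Up to Phase Bootstrap}. ``Including $S$ in the bootstrap group'' in one pass is not something the available bootstrap provides. With the roles made explicit, you get some $\epsilon(\delta)\to 0$, which is all the statement claims.
\item The final truncation to contractions is harmless but unnecessary, since the statement does not require $X',Y'$ to be contractions.
\end{enumerate}
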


\vspace{0.1in}

We also apply these methods to infinite expansions of unitaries that almost commute up to a phase. For some context, Lin proved the following
\begin{thm} (\cite{lin1995almost})
There is a function $\epsilon(\delta
)$ with $\lim_{\delta \to 0^+}\epsilon(\delta)=0$ such that for all $n$ and $U, V \in M_n(\C)$, there exist commuting unitary operators $\mathcal U', \mathcal V'$ such that
\[\|\mathcal U'-U\otimes I_{\infty}\|, \|\mathcal V'-V\otimes I_{\infty}\| \leq \epsilon(\|[U,V]\|),\]
where $I_\infty$ is the identity on a countably infinite dimensional Hilbert space.
\end{thm}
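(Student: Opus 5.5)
The plan is an Eilenberg-swindle argument: in infinite multiplicity the Bott index obstruction can be cancelled. Finite-dimensional stability when the Bott index vanishes (Theorem \ref{unitary thm}) is then applied blockwise. Write $\delta=\|[U,V]\|$, assume $\delta$ is small, and let $k$ be the Bott index of $(U,V)$ (Definition \ref{bott index}). Then $U\otimes I_\infty$, $V\otimes I_\infty$ is unitarily the direct sum $\bigoplus_{j\ge1}(U,V)$ of countably many copies of the pair $(U,V)$ acting on $\C^n\otimes\ell^2$.

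First I will build a compensating pair. I want, for every $k$, unitaries $A_k,B_k\in M_N(\C)$ with $N$ arbitrary (possibly large), Bott index $-k$, and $\|[A_k,B_k]\|\le \epsilon_0(\delta)$ with $\epsilon_0(\delta)\to0$. The idea is to take $|k|$ copies of a Voiculescu pair (clock and shift matrices) of size $m\approx 1/\delta$, transposed or adjointed as needed to flip the sign. Each such pair has commutator about $2\pi/m\lesssim \delta$ and Bott index $\pm1$, and the block sum has Bott index $-k$. This step uses the fact that the Bott index is automatically bounded, $|k|\lesssim n\delta$, but the construction never needs a bound on $N$.

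Second, I would use the pair $(A_k,B_k)$ to run the swindle. Let $(A_k',B_k')$ be the analogous pair with Bott index $+k$. The direct sum $(A_k,B_k)\oplus(A_k',B_k')$ has Bott index $0$, so Theorem \ref{unitary thm} makes it $O(\epsilon_0^{1/8})$-close to a commuting pair. Likewise $(U,V)\oplus(A_k,B_k)$ has Bott index $0$, so it is close to a commuting pair with an estimate independent of $n,N$. Now regroup the infinite sum
\[
\bigoplus_j (U,V)\ \oplus\ \bigoplus_j\big[(A_k',B_k')\oplus(A_k,B_k)\big]
\;=\;\bigoplus_j\big[(U,V)\oplus(A_k,B_k)\big]\ \oplus\ \bigoplus_j (A_k',B_k'),
\]
where the last summand is absorbed into the preceding term of the shifted regrouping. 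Each bracketed block is near commuting, so both regroupings are near a commuting pair. A regrouping is only a permutation of summands and hence a unitary conjugation, so the two expressions are unitarily equivalent, not literally equal. To compare commuting approximants, I would therefore work with the unitarily conjugated pair and pull the commuting approximant back through the same unitary.

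The main obstacle is removing the auxiliary summand $\bigoplus_j(A_k',B_k')\oplus(A_k,B_k)$. I need $\bigoplus_j(U,V)$ itself, not its direct sum with an extra block, to be near commuting on the original space. My first attempt would be an absorption argument of Voiculescu--Weyl--von Neumann type. The near-commuting block $\bigoplus_j[(A_k',B_k')\oplus(A_k,B_k)]$ is close to a commuting pair $(\mathcal C_1,\mathcal C_2)$ with joint spectrum in $\mathbb T^2$. I would then choose the clock/shift pairs so that this joint spectrum is approximately contained, with infinite multiplicity, in a neighbourhood of the approximate joint spectrum of $(U,V)\otimes I_\infty$. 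The unitaries $U\otimes I_\infty$ and $V\otimes I_\infty$ already have infinite multiplicity, so $(U\otimes I_\infty,V\otimes I_\infty)$ should approximately absorb $(\mathcal C_1,\mathcal C_2)$ up to an error controlled by $\delta$. If direct absorption is too delicate, I will instead arrange the swindle so that only commuting pairs with finite spectrum are absorbed. That version needs only diagonal unitary equivalence up to small error, which is elementary. This yields a function $\epsilon(\delta)\to0$ independent of $n$.
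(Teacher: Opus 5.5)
The paper gives no proof of this statement (it is quoted from \cite{lin1995almost}), so your argument has to stand on its own. It has a genuine gap at exactly the step you flag.

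First, the displayed regrouping does not work as written. Its right-hand side still contains $\bigoplus_j(A_k',B_k')$, infinitely many summands of Bott index $+k$, which is just another instance of the statement being proved. An Eilenberg shift only disposes of a single leftover summand. Write $X=(U,V)$ and let $Y$ be a compensating pair of index $-k$. What is true, either by pairing each copy of $(U,V)$ and each copy of $(A_k',B_k')$ with its own copy of $(A_k,B_k)$, or by the one-sided swindle $X\oplus(Y\oplus X)^{\oplus\infty}=(X\oplus Y)^{\oplus\infty}$, is only this: $\mathbf X\oplus\mathbf Z$ is close to a commuting pair, where $\mathbf X=(U\otimes I_\infty,V\otimes I_\infty)$ and $\mathbf Z$ is itself close to a commuting pair $\mathbf C$. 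So everything rests on showing that $\mathbf X\oplus\mathbf C$ is approximately unitarily equivalent to $\mathbf X$, and you do not prove this.

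The elementary absorption argument works when every joint eigenvalue $(\lambda,\mu)$ of $\mathbf C$ is an $\eta$-approximate joint eigenvalue of $(U,V)$. A unit vector $\xi$ with $\|(U-\lambda)\xi\|,\|(V-\mu)\xi\|\le\eta$ can be split off infinitely many copies of $(U,V)$ at a cost of $O(\eta)$, since by normality the off-diagonal parts are $O(\eta)$. After that, the extra copies of $(\lambda,\mu)$ are absorbed exactly. Some such spectral condition is necessary, whether or not $\mathbf C$ has finite spectrum; for instance, $U=V=I$ cannot absorb $(-1,-1)$.

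This is where clock/shift compensators break down. A Voiculescu pair of size $m$ has $O(m^{-1/2})$-approximate joint eigenvectors (wave packets) at every point of $\mathbb{T}^2$. Hence any commuting pair close to $\mathbf Z$ has joint spectrum within $O(m^{-1/2})$, plus the approximation error, of every point of the torus. You cannot choose the clock/shift pairs to prevent this, and in any case the approximant is produced by Theorem~\ref{unitary thm}, not chosen by you. Absorbing it into $\mathbf X$ would require a quantitative theorem: that a pair with nonzero Bott index has $\epsilon(\delta)$-approximate joint eigenvalues near every point of $\mathbb{T}^2$. Your proposal does not address this.

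The clean repair is to compensate with $Y=(U^T,V^T)$:
\begin{itemize}
\item it has the same size and the same commutator norm;
\item its Bott index is $-k$, since the Exel trace formula changes sign under transposition;
\item since $\overline{U^T}=U^\ast$, the map $\zeta\mapsto\overline{\zeta}$ turns approximate joint eigenvectors of $Y$ into approximate joint eigenvectors of $(U,V)$ for the same $(\lambda,\mu)$.
\end{itemize}
The argument then closes as follows.
\begin{itemize}
\item Theorem~\ref{unitary thm} gives commuting $C_0,C_1\in M_{2n}(\C)$ within $\epsilon_1=C\delta^{1/8}$ of $X\oplus Y$ and of $Y\oplus X$.
\item The one-sided swindle shows $X\oplus C_1^{\oplus\infty}$ is within $2\epsilon_1$ of $C_0^{\oplus\infty}$.
\item Every joint eigenvalue of $C_1$ is a $\sqrt2\,\epsilon_1$-approximate joint eigenvalue of $(U,V)$, so $\mathbf X$ absorbs $C_1^{\oplus\infty}$ with error $O(\epsilon_1)$.
\item Finally, $\mathbf X\oplus C_1^{\oplus\infty}\cong(X\oplus C_1^{\oplus\infty})\otimes I_\infty$ is within $2\epsilon_1$ of the commuting unitary pair $C_0^{\oplus\infty}\otimes I_\infty$.
\end{itemize}
This yields $\epsilon(\delta)=O(\delta^{1/8})$.
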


As a corollary of this and our Theorem \ref{Up to Phase Bootstrap}, we immediately obtain
\begin{thm}\label{expansion}
Let $\omega$ be an $m$th root of unity.
There is a function $\epsilon(\delta
)$ that depends on $m$ with $\lim_{\delta \to 0^+}\epsilon(\delta)=0$ such that for all $n$ and $U, V \in M_n(\C)$, there exist unitary operators $\mathcal U', \mathcal V'$ such that
\[\mathcal U'\mathcal V' = \omega \mathcal V'\mathcal U',\]
\[\|\mathcal U'-U\otimes I_{\infty}\|, \|\mathcal V'-V\otimes I_{\infty}\| \leq \epsilon(\|UV- \omega VU\|),\]
where $I_\infty$ is the identity on a countably infinite dimensional Hilbert space $\mathcal H$.
\end{thm}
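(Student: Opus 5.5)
The plan is to deduce the statement from Lin's expansion theorem (\cite{lin1995almost}) via the reduction behind Theorem \ref{Up to Phase Bootstrap}: encode ``commuting up to $\omega$'' as ordinary commutation of larger unitaries that carry a finite group of symmetries. Lin's theorem will supply nearby commuting operators. The symmetry bootstrap will then make them symmetric without losing more than a constant factor, and the symmetry will force them back into the required block form.

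First, the dilation. Let $\omega$ have order $m$. Fix the clock and shift unitaries $A,B\in M_m(\C)$ with $AB=\omega^{-1}BA$; here $A=\diag(1,\omega,\dots,\omega^{m-1})$ (up to the appropriate power) and $B$ is the cyclic shift. Set
\[\tilde U=U\otimes A,\qquad \tilde V=V\otimes B\ \in M_{nm}(\C).\]
A direct computation gives $\tilde U\tilde V-\tilde V\tilde U=(UV-\omega VU)\otimes AB$, so $\|[\tilde U,\tilde V]\|=\|UV-\omega VU\|$. Apply Lin's expansion theorem to $\tilde U,\tilde V$. This produces commuting unitaries $\tilde{\mathcal U},\tilde{\mathcal V}$ on $\C^{n}\otimes\C^m\otimes\H$ within $\epsilon(\|UV-\omega VU\|)$ of $\tilde U\otimes I_\infty$ and $\tilde V\otimes I_\infty$.

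Second, the symmetry. The unitaries $W_1=I\otimes A\otimes I_\infty$ and $W_2=I\otimes B\otimes I_\infty$ satisfy
\[W_1(\tilde U\otimes I_\infty)W_1^\ast=\tilde U\otimes I_\infty,\quad W_1(\tilde V\otimes I_\infty)W_1^\ast=\omega^{-1}\tilde V\otimes I_\infty,\]
and analogously for $W_2$, with $\tilde U\otimes I_\infty$ picking up a phase $\omega^{\pm1}$ and $\tilde V\otimes I_\infty$ fixed. The powers $W_1^m$ and $W_2^m$ are scalars, so these are symmetries of the \underline{$(\zeta,\eta)$} type. I will apply the symmetry bootstrap of \cite{herrera2024projection}, once for $W_1$ and then for $W_2$, to replace $\tilde{\mathcal U},\tilde{\mathcal V}$ by commuting unitaries with the same two symmetries, at the cost of a constant depending only on $m$. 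Concretely, the projection characterization rewrites nearness to commuting unitaries as a family of spectral projections almost commuting with $\tilde V$. Averaging these projections over the finite group generated by $W_1,W_2$ modulo scalars preserves all the estimates. When doing the second bootstrap I must check that it preserves the first symmetry; because $W_1$ and $W_2$ commute up to a scalar, I expect this to go through.

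Third, recovering the form. Suppose $X$ is an operator with $W_1XW_1^\ast=X$ and $W_2XW_2^\ast=\omega X$ (up to the correct sign convention). Since $A$ has distinct eigenvalues, the first relation forces $X=\sum_k Y_k\otimes A^k$ with $Y_k$ acting on $\C^n\otimes\H$ (reordering tensor factors). The twist by $B$ then kills every term except $k=1$. Hence $\tilde{\mathcal U}'=\mathcal U'\otimes A$ and $\tilde{\mathcal V}'=\mathcal V'\otimes B$ for some $\mathcal U',\mathcal V'$ on $\C^n\otimes\H$. Unitarity of the tensor products makes $\mathcal U',\mathcal V'$ unitary. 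Commutation of the tensor products, together with $AB=\omega^{-1}BA$, gives $\mathcal U'\mathcal V'=\omega\mathcal V'\mathcal U'$. Finally, $\|\mathcal U'-U\otimes I_\infty\|=\|\tilde{\mathcal U}'-\tilde U\otimes I_\infty\|$, and likewise for $\mathcal V'$, which yields the estimate with an $m$-dependent $\epsilon$.

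The main obstacle is the second step. The bootstrap in \cite{herrera2024projection} is phrased for the setting of Theorem \ref{Up to Phase Bootstrap}, so I must verify that it applies on the infinite-dimensional space $\C^{nm}\otimes\H$. In particular, the spectral projections and the averaging argument must not use finite-dimensionality, only the finiteness of the symmetry group. If they do use it, I would first try to run the averaging directly on the commuting pair coming from Lin's theorem rather than through the dimension-dependent parts of the projection characterization.
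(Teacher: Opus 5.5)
Your argument is essentially the paper's: there the statement is obtained as Lin's expansion theorem combined with Theorem~\ref{Up to Phase Bootstrap}, applied in $\mathcal A=B(\C^n\otimes\mathcal H)$ with $S=\emptyset$. The block unitaries built from $U\otimes I_\infty,\,V\otimes I_\infty$ are your $\tilde U\otimes I_\infty,\,\tilde V\otimes I_\infty$, and the proof of that theorem is exactly your two successive bootstraps (clock conjugation, then shift conjugation) followed by reading off the block form. Your infinite-dimensional worry is moot, because that bootstrap is formulated for von Neumann algebras with weakly continuous symmetry maps.

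Two corrections to your gloss. First, each bootstrap costs a square root, not a constant factor, so you end with $\epsilon(\delta)=C_m\,\epsilon_{\mathrm{Lin}}(\delta)^{1/4}$; this still tends to $0$. Second, the mechanism is not averaging projections over the group: an average of projections is not a projection, and averaging would equally apply to the purely multiplicative symmetry of Example~\ref{Linear Symmetry Fail for Unitaries}, where the conclusion is false. What the bootstrap actually needs is that at each stage one unitary is twisted by a phase of full order $m$ ($\tilde V$ for $W_1$, $\tilde U$ for $W_2$). So the two unitaries swap roles between the stages, and the earlier symmetry is carried along in the set $S$.
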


\begin{remark}\label{Infinite Dilation}
This provides the complement of the result in \cite{gerhold2024dilation} showing the stability of the relation $UV = \omega VU$ given a dilation by $I_\infty$, when $\omega$ is not a root of unity. 

It also appears that this result is also a corollary of \cite[Theorem 6.4]{hua2021stability}.
\end{remark}

\begin{remark}
Much of this current paper was written prior to when the first preprint of \cite{dor2025almost} was posted to the ArXiv in 2025. Our current paper was originally part of \cite{herrera2024projection}, which was broken off in 2024 as an independent paper with the current paper focusing on the applications of \cite{herrera2024projection}.

The methods used in this paper to construct almost commuting unitaries (without considering symmetries) are related to those of \cite{dor2025almost}. However, we do not reframe the problem in terms of homotopies of unitaries.

In particular, our approach involves using our Projection Characterization from \cite{herrera2024projection} to reduce the almost commuting unitaries problem to the almost representation of the sphere whose solution has been known for a while. Similar considerations are made in \cite{dor2025almost}.

As to other similarities, our Projection Characterization blackbox from the prior work \cite{herrera2024projection} uses projection inequalities in its Section 5 which are similar to those later used in Section 2.1 of \cite{dor2025almost}.
Likewise, the decomposition at the end of the proof of Lemma 2.12 of \cite{dor2025almost} bears some resemblance to our decomposition of the Bott block matrix below in Lemma \ref{index localization}.
Also, their decomposition of intertwined projections in Proposition 2.13 of \cite{dor2025almost} is similar to the use of Lemma 6.1 in Lemma 6.4 of our prior work \cite{herrera2024projection}. 
\end{remark}

\section{Almost Commuting Unitaries}

In this section we review the relevant known results for almost commuting unitary matrices. We refer to \cite{hastings2010almost} and \cite{loring2014quantitative}.

\begin{defn}(\cite{hastings2010almost})
We say that the self-adjoint $H_1, H_2, H_3\in M_n(\C)$ form a $\delta$-almost representation of the sphere if
\[\|H_1^2+H_2^2+H_3^2-I\|\leq \delta, 
\;\; \|[H_i, H_j]\|\leq \delta.\]
We say that $H_1, H_2, H_3$ form an exact representation of the sphere if the above hold for $\delta=0$.
\end{defn}
\begin{defn}
We define the Bott matrix 
\[B(H_1, H_2, H_3) = \bp H_1 & H_2+iH_3 \\H_2-iH_3 & -H_1 \ep\] and the Bott invariant/index of $H_1, H_2, H_3$ to be the signature of $B(H_1, H_2, H_3)$: the number of positive eigenvalues minus the number of negative eigenvalues. 
\end{defn}
One can show that if $H_1, H_2, H_3$ almost represent the sphere then $B(H_1, H_2, H_3)$ has square close to $I \oplus I$.
So, for $\delta$ small enough, the Bott index is well-defined and  stable under small perturbations due to eigenvalue perturbation theory. Because we are not particularly interested in obtaining an estimate with an explicit constant in our later results, we will settle for reviewing the known results with the same level of analysis.

Now, it is straightforward to show that the Bott index vanishes for an exact representation of the sphere. Loring in \cite{loring1998matrices} showed that an almost representation of the sphere is nearby an exact representation of the sphere if and only if the Bott index of the almost representation vanishes. Hastings and Loring in \cite{hastings2010almost} used the estimates proposed in \cite{hastings2009making} to obtain an asymptotic estimate for how close such an almost representation of the sphere is to an exact representation of the sphere.

Although Kachkovskiy and Safarov's result did not exist at the time, their work (instead of \cite{hastings2009making}) in combination with the construction in \cite{hastings2010almost} shows:
\begin{thm}\label{NearbyExactRep}
Suppose that $H_1, H_2, H_3$ are a $\delta$-almost representation of the sphere with vanishing Bott index. Then there are $H_1', H_2', H_3'$ that exactly represent   the sphere with 
\[\|H_i'-H_i\| \leq Const.\delta^{1/4}.\]
\end{thm}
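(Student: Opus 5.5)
The plan is to follow the construction of \cite{hastings2010almost}, replacing the Hastings estimate of \cite{hastings2009making} by the Kachkovskiy--Safarov bound (\ref{KS}). The reduction proceeds in three steps.

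\textbf{Step 1: spectral flattening.} First I would use the spectral projection of $H_1$ to decompose the space as $\mathcal H = \mathcal H_+ \oplus \mathcal H_0 \oplus \mathcal H_-$. Here $\mathcal H_\pm$ correspond to $H_1$ near $\pm 1$, and $\mathcal H_0$ to a middle band; equivalently, one can use the smooth functional calculus as in \cite{hastings2010almost}. On the middle band, $H_2^2+H_3^2 \approx I - H_1^2$ is bounded below. Hence $H_2+iH_3$ is nearly normal there, and after normalizing it is close to a unitary $W$ that almost commutes with $H_1$.

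\textbf{Step 2: index vanishing.} The vanishing of the Bott index is exactly what allows this. It gives the dimension count needed to match the top and bottom eigenspaces of $H_1$ (where $H_2+iH_3$ degenerates). It also ensures that the partial isometries obtained from $H_2+iH_3$ between spectral subspaces can be completed. In this way the problem is transferred to a pair consisting of a self-adjoint matrix and a unitary with vanishing index. Alternatively, one can pass to a pair of almost commuting self-adjoint matrices: $H_1$ together with a self-adjoint "angle" function built from the unitary.

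\textbf{Step 3: apply (\ref{KS}) and reassemble.} The commutator at this stage is $O(\delta^{1/2})$. This square root is lost in the flattening and cut-off steps, where one pays $\|f(H_1)\text{-type commutators}\|$ with Lipschitz-constant blowup chosen by optimizing the band width. Applying (\ref{KS}) to this pair of almost commuting self-adjoint contractions produces a further square root, giving $O((\delta^{1/2})^{1/2}) = O(\delta^{1/4})$. Finally, I reassemble the commuting pair into $H_1', H_2', H_3'$ by defining
\[
H_2'+iH_3' = \sqrt{I-(H_1')^2}\,W',
\]
using a commuting unitary $W'$, and then verify exact relations and the norm estimate.

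The main obstacle is the bookkeeping showing that each step loses at most the stated powers. In particular, the smoothing near the poles, where $I-H_1^2$ is small, must cost only $\delta^{1/4}$. I would first try to handle this by choosing the cut-off width proportional to $\delta^{1/4}$, matching the error of the (\ref{KS}) step.
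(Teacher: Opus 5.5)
Your top-level plan is the paper's own. The paper gives no argument beyond asserting that the construction of \cite{hastings2010almost}, with (\ref{KS}) in place of the estimate of \cite{hastings2009making}, yields the theorem. It remarks only that the sphere-to-cylinder step (vanishing Bott index plus polar decomposition) costs one square root, and that pinching the cylinder back to the sphere is well behaved.

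The gap is in how you fill in that last point. You keep the height $H_1$ as the cylinder coordinate, let (\ref{KS}) produce $H_1'$ near $H_1$, and reassemble with $H_2'+iH_3'=\sqrt{I-(H_1')^2}\,W'$. But $x\mapsto\sqrt{1-x^2}$ is only $\tfrac12$-H\"older at $\pm1$, and (\ref{KS}) gives no control over where the spectrum of $H_1$ moves. With your own numbers (commutator $\sim\delta^{1/2}$, hence $\|H_1'-H_1\|\sim\delta^{1/4}$), take a unit vector $v$ with $H_1v=v$; there $(H_2+iH_3)v=O(\delta^{1/2})$. Nothing prevents $H_1'v=(1-\delta^{1/4})v$, and then $\|(H_2'+iH_3')v\|\sim\delta^{1/8}$. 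More generally, suppose $W$ is the polar part built where $\sqrt{I-H_1^2}\gtrsim s$. Then $\|[W,H_1]\|\sim\delta/s$, the caps cost $s$, and the poles cost $(\delta/s)^{1/4}$. The best choice $s\sim\delta^{1/5}$ gives only $\delta^{1/5}$, and your cut-off $s\propto\delta^{1/4}$ gives $\delta^{3/16}$. So the obstacle you flag is not removed by tuning the cut-off.

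The missing idea is to use a latitude-type height. Let $\Phi$ be $\arccos H_1$, flattened to $0$ and $\pi$ on caps of angular radius $s$. Pinching $(\Phi',W')\mapsto(\cos\Phi',\,\sin\Phi'\,W')$ is then Lipschitz, and the $\tfrac12$-H\"older singularity of $\arccos$ at $\pm1$ is paid once, on the way in. Heuristically:
\begin{itemize}
\item $\|[\Phi,W]\|\lesssim\delta/s^2$;
\item (\ref{KS}) moves $(\Phi,W)$ by $\lesssim\delta^{1/2}/s$;
\item the caps cost $s$.
\end{itemize}
Taking $s=\delta^{1/4}$ gives $\delta^{1/4}$, with the commutator fed into (\ref{KS}) equal to $O(\delta^{1/2})$, as you predicted.

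Two smaller points. First, (\ref{KS}) is about two self-adjoint contractions, so for a (self-adjoint, unitary) pair you should pass through the invertible almost normal element $(2I+\Phi)W$. Its normal approximant is still invertible, and its polar decomposition returns the commuting pair. Second, drop the ``angle function'' alternative: $\arg$ is discontinuous on the circle, so $\arg W$ need not almost commute with $H_1$.
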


We now review the construction of the Bott index for two almost commuting unitary matrices.
Let $f, g, h$ be real-valued operator Lipschitz functions on the unit circle with $f^2+g^2+h^2=1$. Then consider the Bott matrix for two almost commuting unitaries $U, V$ as defined in \cite{loring2014quantitative}:
\begin{align}\label{B^2}
B(U,V) = \bp f(V) & g(V)+\frac12(Uh(V)+h(V)U)\\
g(V)+\frac12(U^\ast h(V)+h(V)U^\ast) & -f(V)
\ep.
\end{align}
Note the notation for the anti-commutator: $\{A,B\} = AB+BA$.

\begin{remark}
There are different versions of $B(U,V)$, but this convention is best for our purposes because if $U, V$ satisfy a linear symmetry then one can show that the entries of $B(U,V)$ also satisfy this linear symmetry because the product $\{A,B\}$ behaves well under anti-multiplicative symmetries. 

Note that we choose the convention that the 2,2 entry of the block matrix be $-f(V)$ and not $1-f(V)$ as it is in some papers. This makes $B(U,V)$ almost an idempotent and we require $f^2+g^2+h^2=1$. The other convention is slightly different on the requirement for $f$ and it makes $B$ almost a projection. 
\end{remark}

By Lemma 5.3 of \cite{loring2014quantitative}, $B(U,V)$ is self-adjoint and \[\|B(U,V)^2-I\oplus I\| \leq 2\|[h(V), U]\|+\|[f(V), U]\|.\]
\begin{defn}\label{bott index}
The Bott invariant/index of $U, V$ is the signature of $B(U,V)$. This is well-defined when $\|[U,V]\|$ small enough and is stable under small perturbations of $U, V$. 
\end{defn}
Two almost commuting unitaries are also referred to as an almost representation of the torus.

Now, consider 
\[H_1 = f(V), \;
H_2 = \Re\left(g(V)+\frac12\{U,h(V)\}\right),\; H_3 = \Im\left(g(V)+\frac12\{U,h(V)\}\right).\]
Then $H_1$, $H_2$, $H_3$ are almost commuting self-adjoint matrices with $H_1^2+H_2^2+H_3^2 \approx I$. The $H_1, H_2, H_3$ then form an almost representation of the sphere. So, we have taken an almost representation of the torus and produced an almost representation of the sphere which has the same Bott index, appropriately defined.

There are other equivalent forms of the Bott invariant for two almost commuting unitaries such as a $K$-theory obstruction, the Exel-Loring winding number invariant from \cites{exel1989almost, exel1991invariants}, and the Exel trace formula (\cite{exel1993soft}):
\[\frac{1}{2\pi}\Tr[\log(U^{-1}V^{-1}UV)].\] 
These are all equal for $\|UV-VU\|$ small enough.

When the Bott invariant vanishes, it has been shown that there exist almost commuting unitaries (\cite{eilers1999morphisms}, \cite{gong1998almost}). However, no explicit estimate was known, as remarked in \cite{hastings2010almost}. This changed in 2025 when \cite{dor2025almost} resolved this issue.

\vspace{0.1in}

The proof of Hastings and Loring for constructing an exact representation of the sphere from an almost representation involves transforming almost representations of various geometric surfaces until one gets to an almost representation of the disk, which is then solved by Lin's theorem. The only non-trivial transformation of the respective spaces is the transformation of an almost representation of the sphere into an almost representation of the cylinder. This is done by utilizing the vanishing of the Bott index and a polar decomposition. The inverse transformation of ``pinching'' the boundary circles of a cylinder to form a sphere is not a diffeomorphism but is well-behaved. Each of these transformations can be done while respecting the norm of commutators although a square root is introduced.

In Section IV of \cite{hastings2010almost}, they discuss some of the difficulties with extending this approach to obtain an asymptotic estimate for the problem of two almost commuting unitaries. 
They note that there is not a nice geometric equivalence between the torus and the sphere that respects commutator norm estimates.

One can think of our method as noting that although a sphere and a torus really are not the same globally, there is a diffeomorphism between a half of the torus (alias ``slice'') such as $\{(e^{i\theta_1}, e^{i\theta_2}): 0< \theta_2 < \pi\}$ with a portion of the sphere with two poles cut off such as $\{(x,y,z): x^2+y^2+z^2, -1/2<x<1/2\}$. These are both a cylinder.

This geometric equivalence is used so we can make use of the relationship between the local projections of the unitary $V$ and the local projections for $H_1$ appropriately defined.

There is an index that must vanish in both cases, but when this index vanishes for the sphere, we can take local projections there for $H_1$ then use them for $V$. We apply the bootstrapping approach to the almost representation of the sphere $H_1 = f(V)$, $H_2 = \Re(g(V)+\frac12(Uh(V)+h(V)U))$, $H_3 = \Im(g(V)+\frac12(Uh(V)+h(V)U))$ and will use projections to obtain projections for $U, V$.

\vspace{0.1in}

Before continuing, we discuss how we will define the Bott index since there is some freedom in the choice of the functions $f, g, h$.
We define $h(e^{i\theta})$ as follows. $h(e^{i\theta})$ will be an operator Lipschitz function supported in $[e^{-i\pi/3}, e^{i\pi/3}]$. Specifically, we modify (3) of \cite{herrera2024projection} to obtain
\begin{align*}
h(x) = \left\{\begin{array}{ll} \sqrt{p(3x/\pi)}, & 0 \leq x \leq \frac\pi3\\
\sqrt{1-p(3x/\pi+1)}, & -\frac\pi3 \leq x \leq 0\\
0, & |x|>\frac\pi3 \\\end{array}\right.
\end{align*}
for $p(x)\geq 0$ a smooth strictly decreasing function on $[0,1]$ satisfying $p(0)=1$, $p(1)=0$, $p(1-x)=p(x)$ and with at least the first six derivatives of $p(x)$ at $x=0$ and at $x=1$ equal to zero. 

As in \cite{herrera2024projection}, $h(x)$ is even and $\sum_n h^2\left(x-\frac{\pi n}6\right) = 1$ on $\R$. Viewing $h$ as a $2\pi$-periodic function on $[-\pi, \pi]$, we see that translates of $h^2\left(x\right)$ form a partition of unity for the unit circle.

Let $g(x) = h(x+\pi)$. We view it as a periodic function on $[-\pi, \pi]$, so $g(x)$ is supported on $[-\pi, -2\pi/3]\cup [2\pi/3, \pi]$. Define 
\begin{align*}
f(x) = \left\{\begin{array}{ll} 
-\sqrt{1-g(x)^2}, & -\pi \leq x < -2\pi/3\\
-1, & -2\pi/3 \leq x < -\pi/3\\
-\sqrt{1-h(x)^2}, & -\pi/3 \leq x <0\\
\sqrt{1-h(x)^2}, & 0 \leq x < \pi/3\\
1, & \pi/3 \leq x < 2\pi/3\\
\sqrt{1-g(x)^2}, & 2\pi/3 \leq x < \pi
\end{array}\right.
\end{align*}
as a $2\pi$-periodic function that is supported on $[-\pi, \pi]$ with two roots: a root at 0 and a root at $-\pi=\pi$. See Figure \ref{fgh Functions}.
\begin{figure}[htp]     \centering
    \includegraphics[width=10cm]{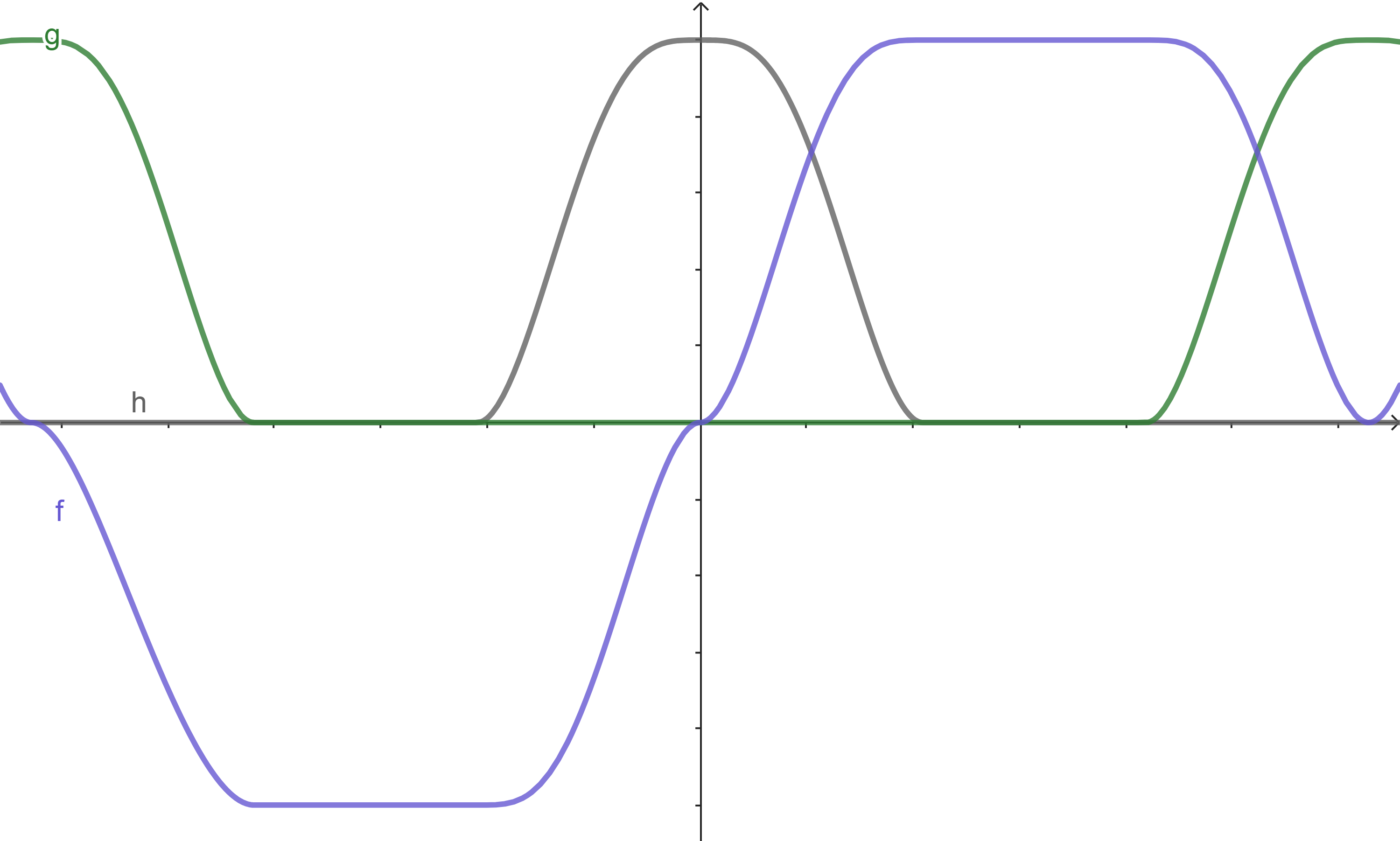}
    \caption{\label{fgh Functions}\dark
    Illustration of $f(x), g(x), h(x)$ on $[-\pi, \pi]$.}
\end{figure}

We will now view $f(x), g(x), h(x)$ as periodic functions on the unit circle.  By the choice of $p(x)$, one can assure that $f, g, h$ are thrice continuously differentiable. 
So, the Fourier coefficients of either of $f, g, h$ decay fact enough by \cite[Corollary 3.3.10]{loukas2014classical} so that by \cite[Theorem 1.1.3]{aleksandrov2016operator},
\[\|[f(V), U]\|, \|[g(V), U]\|, \|[h(V), U]\| \leq Const.\|[U,V]\|.\]

By construction $f^2+g^2+h^2=1$. Also $f$ is odd by construction because $h$ and $g$ are even. Note that if we restrict to $f$ to a compact subset of $(0, \pi/3)$ then $f^{-1}$ is Lipschitz on the respective image.
\begin{figure}[htp]     \centering
\begin{subfigure}[b]{0.3\textwidth}
         \centering
         \includegraphics[width=\textwidth]{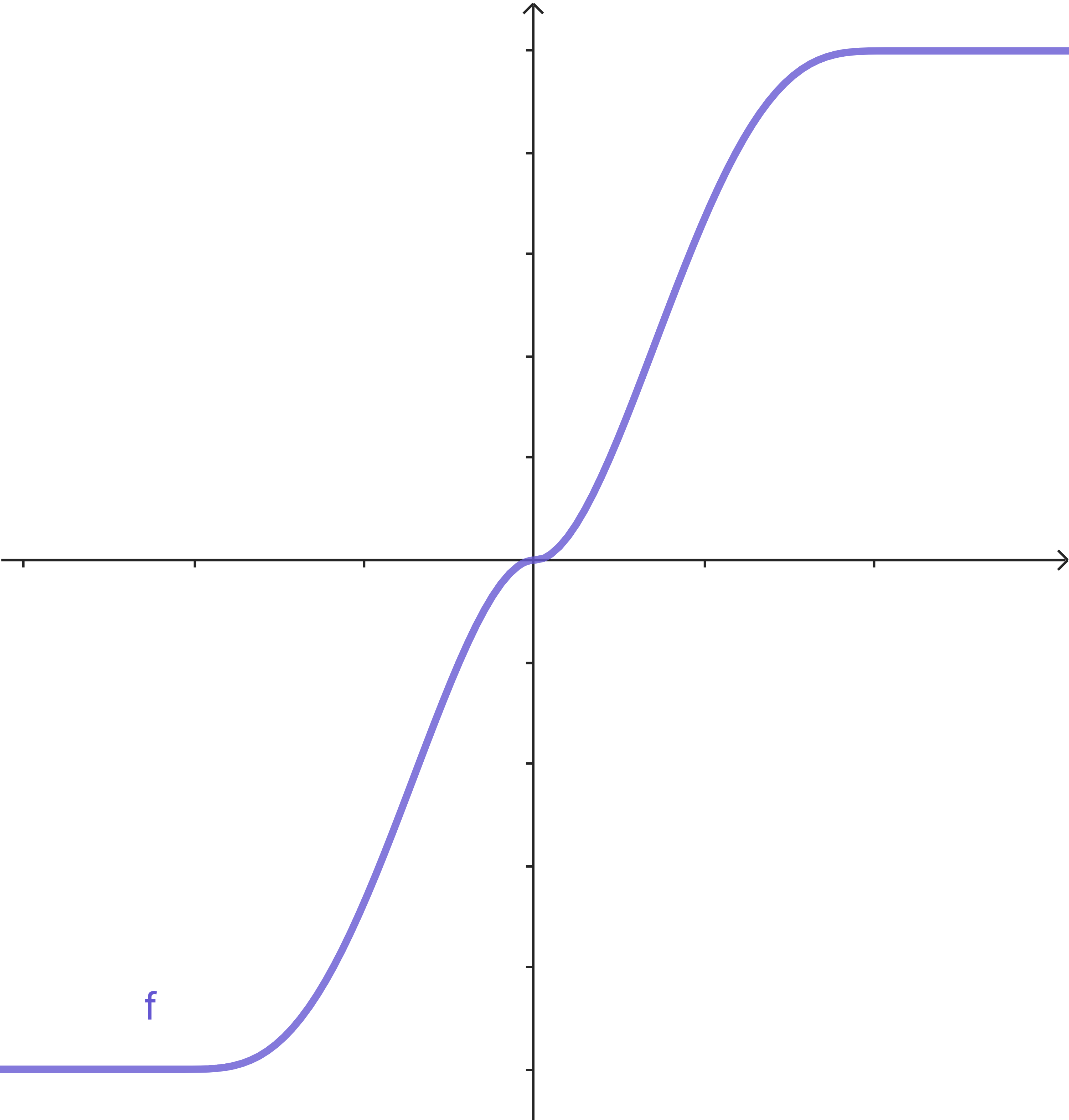}
         \caption{$f(x)$ on $[-\pi/2, \pi/2]$} \label{f Function}
     \end{subfigure}
     \hspace{1in}
     \begin{subfigure}[b]{0.3\textwidth}
         \centering
         \includegraphics[width=\textwidth]{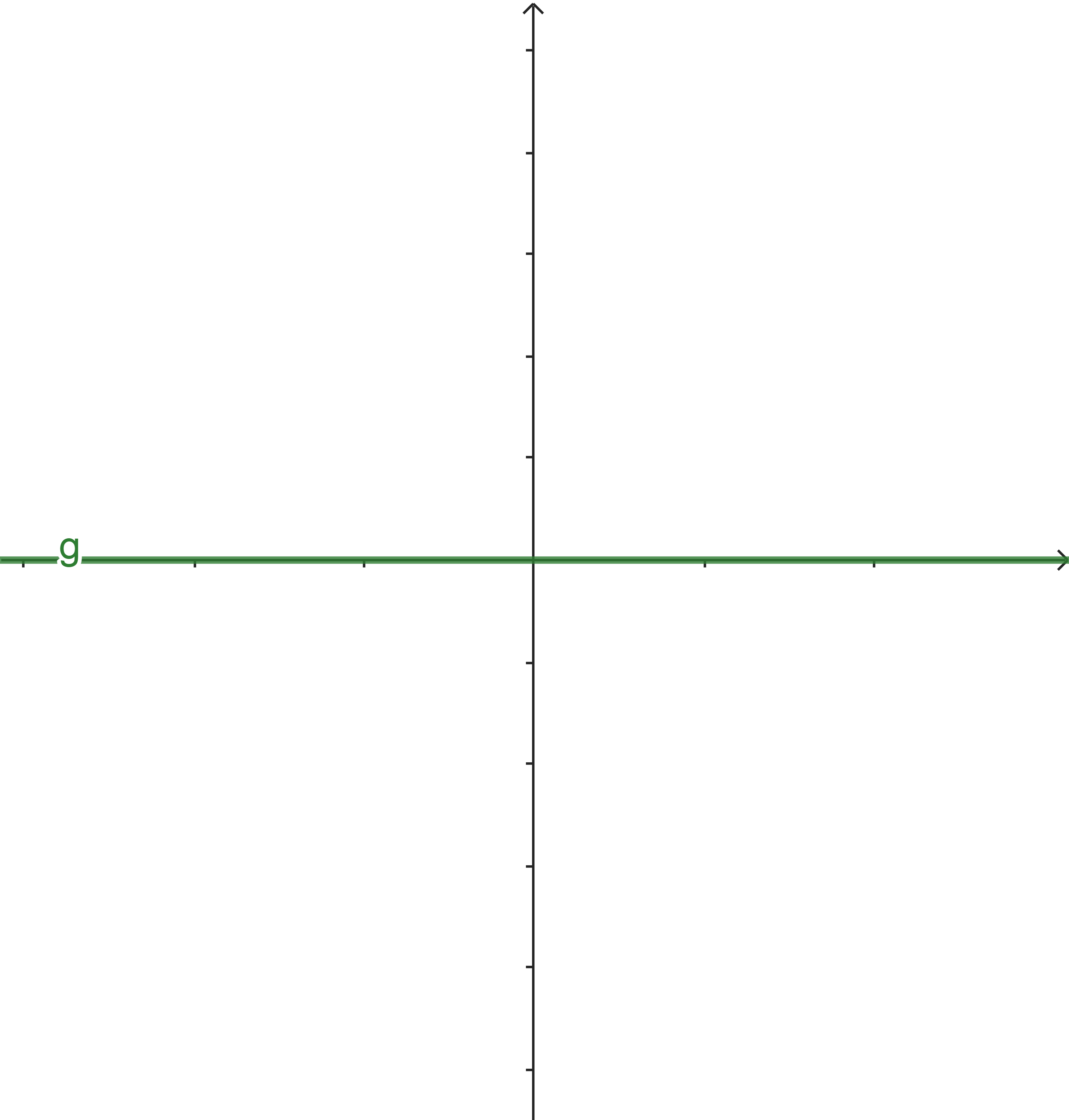}
         \caption{$g(x)$ on $[-\pi/2, \pi/2]$}\label{g Function}
     \end{subfigure}
     \hspace{1in}
     \begin{subfigure}[b]{0.3\textwidth}
         \centering
         \includegraphics[width=\textwidth]{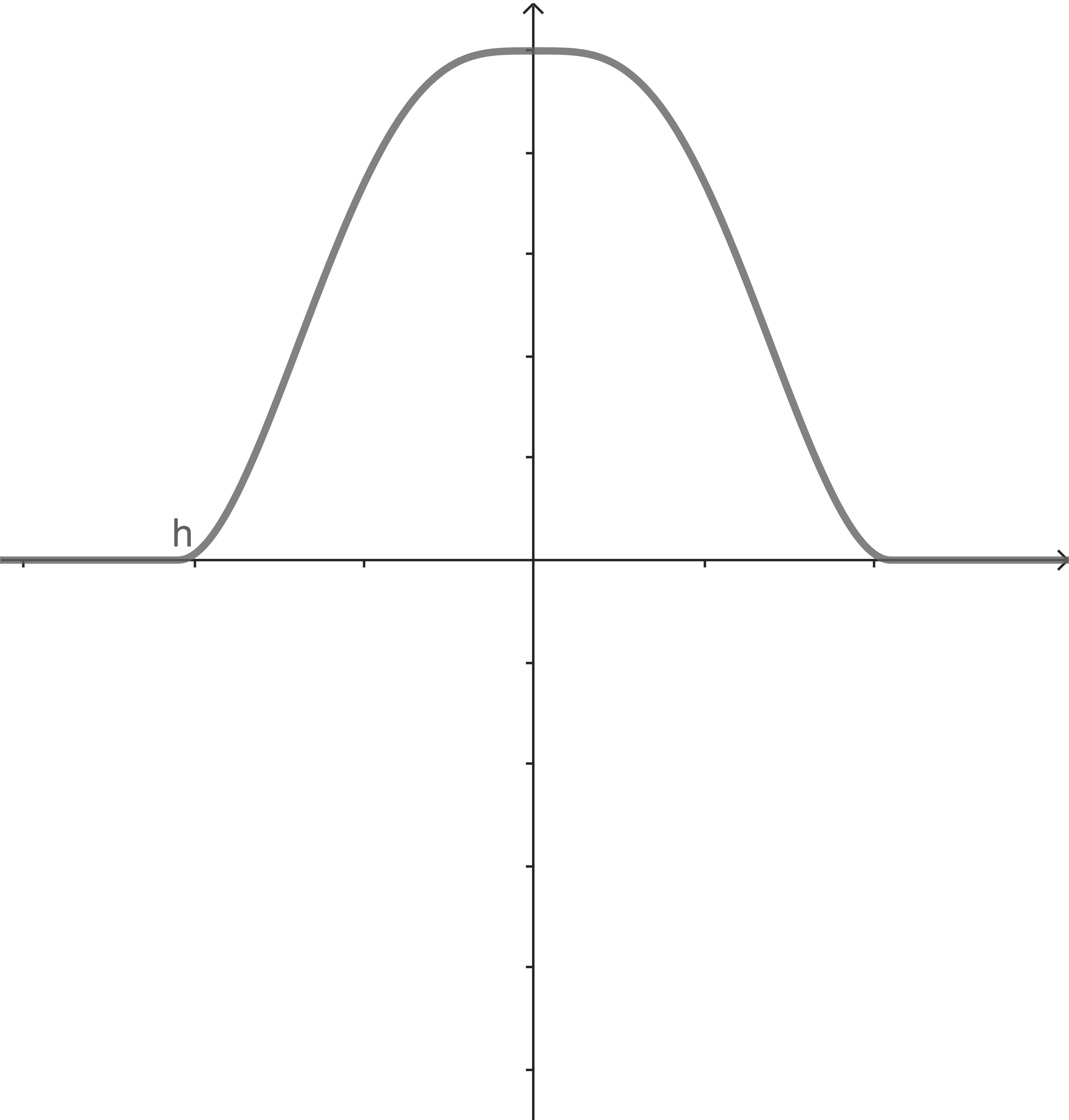}
         \caption{$h(x)$ on $[-\pi/2, \pi/2]$}\label{h Function}         
     \end{subfigure}
\end{figure}

Note that we do not really need to know the exact expressions of $f, g, h$. The main properties needed are the commutator estimates, their squares summing to $1$ and the qualitative properties reflected in the graph. Any smooth functions that have these properties would suffice. 

We now prove a lemma which states that we can localize the Bott index. 
This idea has already been used in proving the logarithm expression for the index in Lemma 2.2 of \cite{exel1991invariants}.

We want to localize the index for the part of the spectrum of $V$ near $0$. So, we will want to only look at the portion of the spectrum of $f(V)$ near $0$, however the problem is that $f(x)$ is zero near the origin (where we want to focus) and also near $\pm\pi$. So, the spectral projection of $f(V)$ on an open set containing zero will contain the desired spectral projection of $V$ but also a second part that we will discard using the following:
\begin{lemma}\label{index localization}
Let $U, V$ be unitary matrices in $M_n(\C)$ and let $E = E_{[e^{-i\pi/2},e^{i\pi/2}]}(V)$, where $[e^{-i\pi/2},e^{i\pi/2}] = \{e^{i\theta}: -\pi/2 \leq \theta \leq \pi/2\}.$

Then for $\|[U,V]\|$ small enough, the signature of $B(U,V)$ equals the signature of $(E\oplus 
 E)B(U,V)(E\oplus E)$.

Also, let $E_h=E_{[e^{-i\pi/3}, e^{i\pi/3}]}(V) \leq E$. Then
\[h(V)=E_hh(V)=h(V)E_h=Eh(V)=h(V)E.\]
Although neither $E$ nor $E_h$ necessarily almost commute with $U$, we have:
\begin{align}\label{U and E}
\|(1-E)UE_h\|, \|E_hU(1-E)\| \leq Const.\|[U,V]\|.
\end{align}
A similar inequality holds for any disjoint sets $K_1, K_2$ with $\dist(K_1, K_2)> 0$: 
\[\|E_{K_2}(V)UE_{K_1}(V)\| \leq Const_{K_1, K_2}\|[U,V]\|.\]
\end{lemma}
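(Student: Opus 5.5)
The proof splits into three parts, and I would prove them in reverse order: first the off-diagonal estimate (\ref{U and E}) and its general version, then the support identities for $h(V)$, and finally the signature statement, which uses both.

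\textbf{Off-diagonal estimate.} Given disjoint closed arcs $K_1, K_2$ at positive distance, choose a smooth function $\phi$ on the circle with $\phi=1$ on $K_1$ and $\phi=0$ on $K_2$. Since $\phi$ is smooth, its Fourier coefficients decay fast, so \cite[Theorem 1.1.3]{aleksandrov2016operator} gives $\|[\phi(V),U]\|\le Const_{K_1,K_2}\|[U,V]\|$, exactly as was done for $f,g,h$. Then
\[E_{K_2}(V)UE_{K_1}(V)=E_{K_2}(V)U\phi(V)E_{K_1}(V)=E_{K_2}(V)[U,\phi(V)]E_{K_1}(V),\]
because $E_{K_2}(V)\phi(V)=0$. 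This gives the general inequality. For (\ref{U and E}), note that $1-E$ is the spectral projection of the arc $\{\pi/2<|\theta|\le\pi\}$, which lies at positive distance from $[e^{-i\pi/3},e^{i\pi/3}]$. The bound on $\|E_hU(1-E)\|$ then follows by taking adjoints, or by swapping the roles of the two arcs.

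\textbf{Support identities.} The function $h$ vanishes off $[-\pi/3,\pi/3]$. Functional calculus therefore gives $h(V)=E_hh(V)=h(V)E_h$, and since $E_h\le E$ we also get $Eh(V)=h(V)E$.

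\textbf{Signature statement.} Write $P=E\oplus E$ and $Q=(1-E)\oplus(1-E)$, and decompose $B=B(U,V)$ into the blocks $PBP$, $PBQ$, $QBP$ and $QBQ$.
\begin{itemize}
\item The entries $f(V)$ and $g(V)$ commute with $E$.
\item The terms $\frac12\{U,h(V)\}$ and $\frac12\{U^\ast,h(V)\}$ contribute to $PBQ$ only through pieces such as $EUh(V)(1-E)=0$ and $Eh(V)U(1-E)=h(V)E_hU(1-E)$. By (\ref{U and E}), the latter has norm at most $Const.\|[U,V]\|$.
\end{itemize}
Hence $\|PBQ\|\le Const.\|[U,V]\|$. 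Next, on the range of $1-E$ we have $h(V)=0$, so $QBQ=Q\bp f(V)&g(V)\\ g(V)&-f(V)\ep Q$. This is the Bott matrix of a single normal operator, and it is a self-adjoint unitary because $f^2+g^2=1$ there. Its eigenvalues are the eigenvalues $\pm1$ of $\bp f&g\\ g&-f\ep$, each with one eigenvector, so its signature is $0$.

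It remains to compare signatures. Since $B^2\approx I\oplus I$ (Lemma 5.3 of \cite{loring2014quantitative}), $B$ has spectrum bounded away from $0$. Deform $B$ along the path $B_t=PBP+QBQ+t(PBQ+QBP)$ for $t\in[0,1]$. The perturbation is small, so every $B_t$ stays invertible and the signature is constant along the path. At $t=0$ we get $\operatorname{sig}(B)=\operatorname{sig}(PBP)+\operatorname{sig}(QBQ)=\operatorname{sig}(PBP)$, where $PBP$ is regarded as an operator on the range of $P$.

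\textbf{Main obstacle.} The hard step is showing that $PBP$ itself stays invertible on the range of $P$, uniformly in $n$. I would compute $(PBP)^2=PB^2P-PBQBP$ and use $\|PBQ\|\le Const.\|[U,V]\|$ together with $B^2\approx I$ to conclude that $(PBP)^2\approx P$. This makes the signatures well-defined and the homotopy argument valid once $\|[U,V]\|$ is small enough.
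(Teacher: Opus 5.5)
Your proof is correct, and its overall structure matches the paper's. You split $B$ with respect to $E\oplus E$ and bound the off-diagonal block by $Const.\|[U,V]\|$ using $h(V)(1-E)=0$ and $Eh(V)=h(V)E_h$. You show the $(1-E)\oplus(1-E)$ corner has signature zero, since by the spectral theorem for $V$ it splits into traceless blocks $\bp f(w_0)&g(w_0)\\ g(w_0)&-f(w_0)\ep$. You then conclude by stability of the signature under a small self-adjoint perturbation of an invertible matrix; your linear path $B_t$ just makes that last step explicit.

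The genuine difference is in how you prove the off-diagonal estimate (\ref{U and E}) and its general version.
\begin{itemize}
\item The paper invokes the localization operator $\L^\Delta_V$ from Lemma 5.7 of \cite{herrera2024projection}. This replaces $U$ by a nearby operator $\L(U)$ whose corresponding blocks between spectral subspaces of $V$ vanish exactly.
\item You instead take a smooth cutoff $\phi$ and use the identity $E_{K_2}(V)UE_{K_1}(V)=E_{K_2}(V)[U,\phi(V)]E_{K_1}(V)$. You combine this with the same Fourier-coefficient commutator bound that the paper already uses for $f,g,h$.
\end{itemize}
Your route is more elementary and self-contained. It also gives a constant depending only on $\dist(K_1,K_2)$, and it works verbatim for arbitrary sets at positive distance, not only closed arcs: take $\phi$ to be a mollified indicator of a neighborhood of $K_1$. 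The paper's route reuses a tool from \cite{herrera2024projection} that produces an exactly block-localized perturbation of $U$, which is convenient within that machinery.

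Finally, the step you flag as the main obstacle is automatic. Since $PBP+QBQ$ is block diagonal and invertible, $PBP$ is invertible on $R(P)$. Your direct computation $(PBP)^2=PB^2P-PBQBP$ is also correct.
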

\begin{proof}
$E$ commutes with $f(V), h(V), g(V)$ and $Eh(V) = h(V)=E_hh(V)=h(V)E_h$, $Eg(V)=0$. 

Let \[B = B(U, V)= \bp f(V) & g(V)+\frac12\{U,h(V)\}\\
g(V)+\frac12\{U^\ast,h(V)\} & -f(V)
\ep.\]
Let $E_2 = E \oplus E$.
So, consider $B_E = E_2BE_2$ and $B_{1-E} = (1-E_2)B(1-E_2)$. Then 
\[\|B-B_E-B_{1-E}\| = \|(1-E_2)BE_2 + E_2B(1-E_2)\| \leq \|(1-E_2)BE_2\|.\]
We will estimate the norm of
\[(1-E_2)BE_2 = \frac12\bp 0 & (1-E)\{U,h(V)\}E\\
(1-E)\{U^\ast,h(V)\}E & 0
\ep.\]
Here we use that the support of $h(V)$ is $E_h=E_{[e^{-i\pi/3},e^{ i\pi/3}]}(V) \leq E$.
So, \[(1-E_2)BE_2 = \frac12\bp 0 & (1-E)U E_hh(V)\\
(1-E)U^\ast E_h h(V) & 0
\ep.\]

Note that $1-E$ and $E_h$ are spectral projections for $V$ for arcs in the unit circle that are separated by arcs of length $\pi/6$. We make use of the localization operator $\L^\Delta_{V}$ from Lemma 5.7 of \cite{herrera2024projection}. Choosing $\Delta$ small enough, we have by (iv) of that lemma, 
\[(1-E)\L(U)E_h = E_h\L(U)(1-E) = 0\] and by (iii) of that lemma,
\[\|\L(U) - U\| \leq Const.(\|[V, U]\|+ \|[V,U^\ast]\|) = Const.\|[U, V]\|.\]
We then get 
\[\|(1-E)UE_h\| , \|E_hU(1-E)\| \leq Const.\|[U, V]\|.\]
Similar bounds follow for general $K_1. K_2$ by choosing $\Delta$ small enough.

We then have
\[\|B-B_E-B_{1-E}\| \leq Const.\|[U, V]\|.\]
For $\|[U,V]\|$ small enough, $B_E+B_{1-E}$ will be invertible and have the same signature as $B$. By construction, $B_E+B_{1-E}$ is a block matrix with respect to $E, 1-E$ whose non-zero eigenvalues are the union of the non-zero eigenvalues of $B_E$ and $B_{1-E}$, with multiplicity. So, the signature of $B_E+B_{1-E}$ equals the sum of the signatures of $B_E$ and of $B_{1-E}$.

We now investigate the signature of $B_{1-E}$. This is straightforward because $h(V)(1-E)=0$ so
\[B_{1-E}= \bp f(V)(1-E) & g(V)(1-E)\\
g(V)(1-E) & -f(V)(1-E)
\ep.\]
By the spectral theorem, $B_{1-E}$ is similar to the direct sum of a zero matrix and $2\times 2$ matrices of the form
\[\bp f(w_0) & g(w_0)\\ g(w_0) & -f(w_0)\ep\]
for $w_0 \in [e^{-i\pi},e^{i\pi}] \setminus [e^{-i\pi/2}, e^{i\pi/2}]$. These $2 \times 2$ matrices have trace equal to zero so their signature is zero. Thus, the signature of $B_{1-E}$ is zero.

This tells us that the signature of $B$ is equal to the signature of $B_E$ for $\|[U,V]\|$ small.
\end{proof}

\begin{example}\label{BottLocalizationEx}
We know that in order for $U, V$ to have nearby commuting unitaries, it is required that the signature of $B(U, V)$ vanish. We will assume this.

Note that we can continuously change $V_0$ to $V$ by multiplying by $e^{i\theta}$ with $\theta$ increasing from $0$ without affecting the norm of the commutator of $U, V_0$ and without $B(U,V_0)$ becoming singular. This is equivalent to continuously shifting the functions $f, g, h$. In particular, the signature will remain zero after replacing $V_0$ with $V$.

Let $E$ be the spectral projection of $V$ for the closed arc from $-\pi/2$ to $\pi/2$. 
By Lemma \ref{index localization}, the signature of $B_E = EB(U,V)E$ is also zero. We will pull out of this an almost representation of the sphere.

Note that 
\[B_E = \bp f(V)E & \frac12E\{U,h(V)\}E\\
\frac12E\{U^\ast, h(V)\}E & -f(V)E
\ep.\]
We define 
\[H_1 = f(V)E,\;\; H_2 = \frac12\Re\left(E\{U,h(V)\}E\right),\;\; H_3 = \frac12\Im\left(E\{U,h(V)\}E\right).\] 
We now show that $H_1, H_2, H_3$ form an almost representation of the sphere as operators on the finite dimensional Hilbert space $R(E)$ which has identity operator $E$.

Clearly, $-E \leq H_i \leq E$. We will provide bounds for the commutators. Note that because  $i[H_1,H_2]$, $i[H_1,H_3]$ are self-adjoint, $\|[H_1, H_2]\|, \|[H_1, H_3]\| \leq \|[H_1, H_2+iH_3]\|$. So, we bound
\begin{align*}
\|[H_1, H_2+iH_3]\| &= \|[f(V)E, 
 \frac12 E\{U, h(V)\}E]\| \\
 &= \frac12\|E[f(V),U]h(V)E+Eh(V)[f(V),U]E\| \\
 &\leq \|[f(V), U]\| \leq Const. \|[U,V]\|. 
\end{align*}

By (\ref{U and E}), $\|E_hUE - E_hU\|, \|EUE_h - UE_h\| \leq Const.\|[U,V]\|$.
Now we estimate
\begin{align*}
\|[\,E\{U^\ast&,h(V)\}E, E\{U,h(V)\}E\,]\| \\
&= \|\,[EU^\ast E_hh(V)+h(V)E_hU^\ast E, EUE_hh(V)+h(V)E_hUE\,]\|.
\end{align*}
This breaks into four terms:
\begin{align*}
\|[EU^\ast E_hh(V), EUE_hh(V)]\|&= \|E\left(U^\ast h(V)U - Uh(V)U^\ast \right)h(V)\|\\
&\leq \|U^\ast h(V)U - Uh(V)U^\ast\| =\|h(V)U^2 - U^2h(V)\| \\
&\leq 2\|[h(V), U]\| \leq Const.\|[U,V]\|,
 \end{align*}
\begin{align*}
\|[EU^\ast & E_hh(V),h(V)E_hUE]\| = \|E\left(U^\ast h^2(V)U-h(V)E_hUEU^\ast h(V)\right)E\| \\
&\leq \|h^2(V)-h(V)(E_hU)U^\ast h(V)\| + Const.\|[U,V]\|=Const.\|[U,V]\|,
\end{align*}
\begin{align*}
\|[h(V)&E_hU^\ast E, EUE_hh(V)]\|=\|E\left(h(V)U^\ast EUE_hh(V)-Uh^2(V)U^\ast\right)E\|\\
&\leq \|h(V)U^\ast (UE_h)h(V)-h^2(V)\| + Const.\|[U,V]\|=Const.\|[U,V]\|,
\end{align*}
\begin{align*}
\|[h(V)E_hU^\ast E, h(V)E_hUE]\|&=\|h(V)\left(U^\ast  h(V)U-Uh(V)U^\ast\right)E\|\\
&\leq Const.\|[U,V]\|.
\end{align*}

So,
\begin{align*}
\|[H_2, H_3]\| &= \frac12\|[(H_2+iH_3)^\ast, H_2+iH_3]\| \\
&\leq \left\|\left[E\{U^\ast,h(V)\}E, E\{U,h(V)\}E\right]\right\| \leq Const.\|[U,V]\|.
\end{align*}

So, $H_1, H_2, H_3$ are almost commuting. They satisfy 
\begin{align*}
\|&H_1^2 + H_2^2+H_3^2-E\| \\
&\leq \|H_1^2 + (H_2+iH_3)^\ast(H_2+iH_3)-E\| + Const.\|[U,V]\|\\
&= \|E\left(f^2(V)+\frac14\{U^\ast,h(V)\}E\{U,h(V)\}-I\right)E\|+Const.\|[U,V]\|\\
&\leq \|E\left(f^2(V)+h^2(V)-I\right)E\|+Const.\|[U,V]\|=Const.\|[U,V]\|
\end{align*}
as above.

So, $H_1, H_2, H_3$ form an almost representation of the sphere in the Hilbert space $R(E)$. Because $B(H_1, H_2, H_3) = B_E(U,V)$ has signature zero, there are commuting self-adjoint $H_i'$ with ${H_1'}^2+{H_2'}^2+{H_3'}^2 = E$ and $\|H_i' - H_i\| \leq Const. \|[U,V]\|^{1/4}$.
\end{example}

To discuss the symmetries that we can preserve or obtain, we make use of the definition of a symmetry map from \cite[Definition 2.2]{herrera2024projection}:
\begin{defn} An $\R$-linear map $\varphi$ on a unital $C^\ast$-algebra $\mathcal A$ is a symmetry map if:
\begin{enumerate}
\item $\varphi$ is $\C$-linear or conjugate linear.
\item $\varphi$ is multiplicative or anti-multiplicative.
\item $\varphi(I)=I$ and $\varphi(A^\ast)=A^\ast$ for all $A \in \mathcal A$.
\end{enumerate}

\end{defn}

We now prove:
\begin{lemma}\label{UtoHtoU}
Let $\mathcal A$ be a von Neumann algebra and $S$ a collection of weakly continuous linear symmetry maps on $\mathcal A$. Let $U, V \in \mathcal A$ be $S$-symmetric unitaries.
For $z \in \bS$, we define $E$ to be the spectral projection of $V$ on the arc $(e^{-\pi i/2}z, e^{\pi i/2}z)\subset \bS$ and the $S$-symmetric self-adjoint $H_1, H_2, H_3 \in \mathcal A$ by \[H_1 = f(\overline{z}V)E,\;\; H_2 = \frac12\Re\left(E\{U,h(\overline{z}V)\}E\right),\;\; H_3 = \frac12\Im\left(E\{U,h(\overline{z}V)\}E\right).\]
Suppose that there exists an $\varepsilon>0$ so that for any $z \in \bS$, there are commuting $S$-symmetric self-adjoint $H_i'\in \mathcal A$ for the defined $H_i$ operators above with:
\[\|H_1'-H_1\|, \|H_2'-H_2\|, \|H_3'-H_3\| \leq \varepsilon.\] 
Then if $\varepsilon$ is universally small enough, there are commuting $S$-symmetric unitaries $U', V' \in \mathcal A$ such that
\[\|V'-V\|\leq C\varepsilon^{1/2},\;\; \|U'-U\|\leq C\varepsilon^{1/2}+C\frac{\|[U,V]\|}{\varepsilon^{1/2}}.\]
for a universal constant $C$.
Moreover, if $U, V$ have a phase symmetry as in one of the following cases then $U', V'$ can be chosen to also satisfy the same phase symmetry:
\begin{enumerate}[label=(\roman*)]
\item Suppose that $\phi$ is a weakly continuous conjugate-linear symmetry map on $\mathcal A$ with order $2$ such that $\phi(U) = \zeta_0 U$ and $\phi(V) = \eta_0 V$ (resp. $\phi_\ast(V) = \eta_0 V$). Then $U', V'$ can be chosen such that $\phi(U') = \zeta_0 U'$ and $\phi(V') = \eta_0 V'$ (resp. $\phi_\ast(V') = \eta_0 V'$).

\item Suppose that $\varphi$ is a weakly continuous linear symmetry map on $\mathcal A$ with order $n$ such that $\varphi(U) = \zeta_1 U$ for $\zeta_1$ having order $n$ and $\varphi(V) = \eta_1 V$ (resp. $\varphi_\ast(V) = \eta_1 V$). Then $U', V'$ can be chosen such that $\varphi(U') = \zeta_1 U'$ and $\varphi(V') = \eta_1 V'$ (resp. $\varphi_\ast(V') = \eta_1 V'$).

In this case, $C$ will depend on $n$.

\item Suppose that $\varphi, \phi$ are as in Cases (i) and (ii) above and satisfy $\phi \circ \varphi \circ \phi = \varphi^{-1}$. Then $U', V'$ can be chosen in a way that they satisfy the four phase symmetries of (i) and (ii). 

In this case, $C$ will depend on $n$.
\end{enumerate}
\end{lemma}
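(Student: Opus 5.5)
The approach is to reduce to the Projection Characterization of \cite{herrera2024projection}. That result says two operators $U,V$ with $V$ unitary are nearby commuting $U',V'$ (with symmetries preserved) provided that, for a family of arcs covering the circle, there are $S$-symmetric projections $P$ which are close to the spectral projections of $V$ on those arcs and which almost commute with $U$ in the quantitative sense required there. The Symmetry Bootstrap then upgrades the symmetric version to one that also respects the phase symmetries. So the plan is to manufacture such projections from the commuting $H_i'$, centre by centre.

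\textbf{Step 1: a local projection at each centre.} Fix $z\in\bS$. The commuting $S$-symmetric $H_1',H_2',H_3'$ are $\varepsilon$-close to $H_1=f(\overline{z}V)E$. Take a spectral projection $P_z=E_{[-a,a]}(H_1')$, or rather a smoothed version $\chi(H_1')$, with $a$ a fixed small constant, which picks out the part of $R(E)$ where $\overline{z}V$ lies near $1$. Because $\chi$ is a real function of an $S$-symmetric self-adjoint element, $P_z$ is $S$-symmetric for the linear maps. We also need $f$ restricted near $0$ to have Lipschitz inverse, as noted after the definition of $f$.

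\textbf{Step 2: transfer to $V$ and to $U$.} Since $\|H_1'-H_1\|\le\varepsilon$, standard projection inequalities (the Section 5 estimates of \cite{herrera2024projection}) show that $P_z$ is close to spectral projections of $V$: it lies between $E_{K_1}(V)$ and $E_{K_2}(V)$ for nested arcs $K_1\subset K_2$ around $z$, up to errors of size $\varepsilon^{1/2}$ arising from a gap of width $\sim\varepsilon^{1/2}$. The reason $P_z$ is useful is that it commutes exactly with $H_2'+iH_3'$. This operator is $\varepsilon$-close to $\frac12E\{U,h(\overline{z}V)\}E$, which on the support of $h$ acts like $U$ times an invertible factor. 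Dividing out by $h$ on the relevant arc, where $h$ is bounded below, gives $\|[P_z,U]\|\lesssim\varepsilon^{1/2}+\|[U,V]\|/\varepsilon^{1/2}$. The second term comes from commuting $U$ past functions of $V$ with Lipschitz constant $\varepsilon^{-1/2}$, using Lemma \ref{index localization} for the off-arc leakage.

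\textbf{Step 3: assemble the unitaries.} Run Steps 1 and 2 for finitely many centres $z_k$ (say twelfth roots of unity), so the arcs cover $\bS$. Feed these projections into the Projection Characterization to get commuting $S$-symmetric $U',V'$ with $\|V'-V\|\le C\varepsilon^{1/2}$ and $\|U'-U\|\le C\varepsilon^{1/2}+C\|[U,V]\|/\varepsilon^{1/2}$.

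\textbf{Step 4: phase symmetries.} For cases (i)–(iii), apply the Symmetry Bootstrap of \cite{herrera2024projection}. Under $\varphi$ with $\varphi(V)=\eta_1V$, the map sends spectral projections at $z$ to those at $\eta_1 z$ (or $\overline{\eta_1 z}$ in the starred case), so the family of centres should be chosen invariant under this action. The bootstrap averages or twists over the cyclic group generated by $\varphi$ (and $\phi$), which yields constants depending on $n$.

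\textbf{Main obstacle.} I expect the hardest step to be Step 2: showing that $P_z$ almost commutes with $U$ itself, not merely with $E\{U,h\}E$. My plan there is to write $U P_z\approx$ a functional calculus expression in $H_2'+iH_3'$ divided by $h$, controlled on the region where $h\ge c$ and $E$ agrees with $E_h$ up to the leakage bounds of (\ref{U and E}).
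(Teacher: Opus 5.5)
There is a genuine gap in Steps 1--3: your projections are localized at the wrong scale. The Projection Characterization (Lemmas 6.4 and 6.6 of \cite{herrera2024projection}) gives $\|V'-V\|\le Const.\,L$, where $L$ is the length of the arcs for which you supply projections. To reach $\|V'-V\|\le C\varepsilon^{1/2}$ you therefore need, for arcs centred at \emph{every} point of $\bS$, nested arcs $\Omega_0\subset\subset\Omega$ of length $\sim L=\varepsilon^{1/2}$, with $\Omega\setminus\Omega_0=\Omega_-\cup\Omega_+$ and $\dist(\Omega_-,\Omega_+)\sim L$. You also need $S$-symmetric projections of the exact form $F=F_-+E_{\Omega_0}(V)+F_+$ with $F_\pm\le E_{\Omega_\pm}(V)$ and $\|[F,U]\|$ small.

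Your $P_z=E_{[-a,a]}(H_1')$, with $a$ fixed and twelve centres, localizes $V$ only to arcs of fixed length. Only the buffer in Step 2 has size $\varepsilon^{1/2}$, so the bound claimed in Step 3 does not follow. Nor can the window around $0$ simply be shrunk, for two reasons:
\begin{enumerate}[label=(\alph*)]
\item $f$ is flat at $0$. Since the first six derivatives of $p$ vanish at the endpoints, $f(x)=O(x^{7/2})$ as $x\to0$, so $f^{-1}$ is \emph{not} Lipschitz near $0$. The paper asserts this only on compact subsets of $(0,\pi/3)$, not near $0$ as you state. A spectral gap of width $\delta$ for $H_1$ near $\pm a$ then becomes a far worse loss for $V$.
\item $H_1=f(\overline{z}V)E$ vanishes identically on $R(1-E)$. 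Hence every spectral window of $H_1'$ that contains $0$ also approximately contains the whole opposite semicircle $1-E$, and $P_z$ is not localized near $z$ at all.
\end{enumerate}
In addition, a spectral projection of $H_1'$ is only approximately sandwiched between spectral projections of $V$. It still has to be corrected to the exact form above while keeping $S$-symmetry, and a smoothed $\chi(H_1')$ is not a projection.

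The paper avoids these problems by rotating. With $V_z=\overline{e^{-\pi i/6}z_0}V$, an arc centred at an arbitrary $z_0$ for $V$ becomes an arc centred at $e^{\pi i/6}$ for $V_z$. This arc lies inside $(e^{\pi i/12},e^{\pi i/4})$, where $f$ is bi-Lipschitz and $h$ is bounded below, and the corresponding windows $f(\Omega_0)\subset f(\Omega)$ lie in $(0,1)$, away from $0$. Lemma 6.8 of \cite{herrera2024projection}, applied to $X=H_1$, $B=H_2+iH_3$ and their commuting approximants, then yields $S$-symmetric $F=F_-+E_{\Omega_0}(V_z)+F_+$ with $\|[F,B]\|\lesssim\varepsilon/L$. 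Because $\Omega$ has length $O(L)$, on $R(F)$ one can replace $h(V_z)$ by the constant midpoint value $h_\Omega$ at cost $O(L)$. Combined with the leakage bounds of Lemma \ref{index localization}, this gives $\|[F,U]\|\lesssim \varepsilon/L+L+\|[U,V]\|$. The term $\|[U,V]\|/L$ in the final bound comes from the Projection Characterization's own estimate, not from $[F,U]$, and the choice $L=\varepsilon^{1/2}$ balances the terms. Since projections are then available at every centre, invariance of the family of centres under the phase symmetries is automatic in Step 4.
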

\begin{proof}
We will apply Lemmas 6.1, 6.4, and 6.6 of \cite{herrera2024projection} analogous to the proof of Theorem 7.6 of \cite{herrera2024projection}.

Let the point $z_0$ in the unit circle be arbitrary. Let $V_z=\overline{e^{-\pi i/6}z_0}V$ so that $E=E_{(e^{-\pi i/2}, e^{\pi i/2})}(V_z)$ is the spectral projection of $V$ on the open semicircle centered at $e^{-\pi i/6}z_0$. 

Now, $E_{(0,1)}(f(V_z)E) = E_{(0, \pi/3)}(V_z)$. Moreover, for $0< y_1< y_2 < 1$, we have
$E_{(y_1, y_2)}(f(V_z   )E) = E_{(f^{-1}(y_1),f^{-1}(y_2))}(V_z)$. 
We know that $f$ and $f^{-1}$ are bijective and continuous on the relevant intervals $(e^{-\pi i/3}, e^{\pi i/3})$ and $(-1,1)$, respectively. Restricted to $(e^{\pi i/12}, e^{\pi i /4})$, $f$ and its inverse on the image are Lipschitz.
Note that we have chosen an arc centered at $z_0$ for $V$ to correspond to an arc centered at $e^{\pi i/6}$ for $V_z$ so that $f^{-1}$ will be Lipschitz on this latter interval if it has length less than $2(\pi/6)$.

Choose arcs $\Omega_0 \subset \subset \Omega \subset (e^{\pi i/12}, e^{\pi i/4})\subset \bS$ centered at $e^{ \pi i/6}$ with left and right subarcs $\Omega_-, \Omega_+$ of $\Omega\setminus \Omega_0$. Let $\Omega_-,\Omega_0, \Omega_+$ have the same length.
Let $L_0$ be small enough so that $\dist(\bS\setminus \Omega, \Omega_0)$ and $\dist(\Omega_-, \Omega_+)$ will be between $L$ and $2L$ for $L \leq L_0$. These conditions imply that the length and diameter of $\Omega$ is bounded by $Const.L$.

We will need to construct $S$-symmetric projections $F, F_-, F_+$ such that
$F=F_-+E_{\Omega_0}(V_z)+F_+$ where $F_- \leq E_{\Omega_-}(V_z)$, $F_+ \leq E_{\Omega_+}(V_z)$, and 
\[\|[F, U]\| \leq \epsilon\]
for some expression $\epsilon$ that we will determine later. 

Consider the $S$-symmetric operators $X = H_1$ and $B = H_2+iH_3$, where $X$ is self-adjoint. By assumption, there are commuting $S$-symmetric operators $X'=H_1'$ and $B' = H_2'+iH_3'$, where $X'$ is self-adjoint and
\[\|X'-X\|, \|B'-B\| \leq 2\varepsilon.\]
So, by Lemma 6.8 of \cite{herrera2024projection}, there are $S$-symmetric projections 
$F$, $F_-, F_+$ so that 
\[F = F_-+E_{f(\Omega_0)}(f(V_z)E)+F_+=F_-+E_{\Omega_0}(V_z)+F_+,\] 
\[F_- \leq E_{f(\Omega_-)}(f(V_z)E)=E_{\Omega_-}(V_z),\] 
\[F_+ \leq E_{f(\Omega_+)}(f(V_z)E)=E_{\Omega_+}(V_z),\] 
and
\[\|[F, B]\| \leq Const.\left(\frac1{\dist(\R\setminus f(\Omega), f(\Omega_0))}+\frac1{\dist(f(\Omega_-), f(\Omega_+))}\right)\varepsilon.\]
Because $f^{-1}$ is Lipszhitz on $(f(e^{\pi i/12}), f(e^{\pi \pi/4}))$, we see that
\[\|[F, B]\|\leq Const.\left(\frac1{\dist(\bS\setminus \Omega, \Omega_0)}+\frac1{\dist(\Omega_-, \Omega_+)}\right)\varepsilon.\]

We now see what this inequality tells us about $[F, U]$. Roughly speaking, 
\[FB \approx h_\Omega FU, \;\;\; BF \approx h_\Omega UF\]
for $h_\Omega \in h(\Omega)$ since $F\leq E_{\Omega}(V_z)$. The error in the ``$\approx$'' will depend on the length of $\Omega$. This will give us a bound for $\|[F,U]\|$ in terms of the above bound for $\|[F,B]\|$ because $h_\Omega$ wil be bounded away from zero.

Let $\ell$ be the length of $\Omega$, which is bounded by $Const. L$. Let $h_\Omega$ be the midpoint of $h(\Omega)$. Then \[\|(h(V_z)-h_{\Omega}I)E_{\Omega}(V_z)\| \leq \frac12\|h'\|_{L^\infty}\ell = Const. \ell.\]
Let $\Omega_U$ be an arc centered at $1$ of slightly greater length by $Const.L$ than $\Omega$. By Lemma \ref{index localization}, 
\[\|E_{\Omega}(V_z)U-E_{\Omega}(V_z)UE_{\Omega_U}(V_z)\|, \|UE_{\Omega}(V_z)-E_{\Omega_U}(V_z)UE_{\Omega}(V_z)\| \leq Const.\|[U,V]\|.\]
Since $F \leq E_{\Omega}(V_z)$, this implies
\begin{equation}\label{UF restr}
\|FU-FUE_{\Omega_U}(V_z)\|, \|UF-E_{\Omega_U}(V_z)UF\| \leq Const.\|[U,V]\|.
\end{equation}
So,
\begin{align*}
h_{\Omega}&\|[F,U]\| = \|h_{\Omega}FU-h_{\Omega}UF\|\\
&\leq \|h_{\Omega}FU-h_{\Omega}UF-F(H_2+iH_3)+(H_2+iH_3)F\|+\|[F,H_2+iH_3]\|\\
&\leq \|h_{\Omega}FU-F(H_2+iH_3)\|+\|h_{\Omega}UF-(H_2+iH_3)F\|+\|[F,B]\|
\end{align*}
Using $F \leq E_{\Omega}(V_z)\leq E_{\Omega_U}(V_z) \leq E$ and (\ref{UF restr}), we bound the first two terms of the above inequality in four parts:
\begin{align*}
\|h_{\Omega}FU-F&EUh(V_z)E\| \\\leq &\|h_{\Omega}FUE_{\Omega_U}(V_z)-FUE_{\Omega_U}(V_z)h(V_z)\| + Const.\|[U,V]\|\\
&\leq\|FUE_{\Omega_U}(V_z)(h_{\Omega}I-h(V_z))\| + Const.\|[U,V]\|\\
&\leq Const.\ell+Const.\|[U,V]\|,
\end{align*}
\begin{align*}
\|h_{\Omega}FU-FEh(V_z)UE\|&\leq 
\|h_{\Omega}FU-FEUh(V_z)E\| + Const.\|[U,V]\|\\
&\leq Const.\ell+Const.\|[U,V]\|,
\end{align*}
\begin{align*}
\|h_{\Omega}UF&-Eh(V_z)UEF\|\\
&\leq \|h_{\Omega}E_{\Omega_U}(V_z)UF-h(V_z)E_{\Omega_U}(V_z)UF\|+Const.\|[U,V]\|\\
&= \|(h_{\Omega}I-h(V_z))E_{\Omega_U}(V_z)UF\|+Const.\|[U,V]\|\\
&\leq Const.\ell+Const.\|[U,V]\|.
\end{align*}
\begin{align*}
\|h_{\Omega}UF-EUh(V_z)EF]\|&\leq \|h_{\Omega}UF-Eh(V_z)UEF\|+Const.\|[U,V]\|\\
&\leq Const.\ell+Const.\|[U,V]\|.
\end{align*}

If we use the fact that $\dist(\bS\setminus \Omega, \Omega_0)$, $\dist(\Omega_-, \Omega_+), \ell$ are comparable to $L$ then
\begin{align}\label{[F,U]}
\|[F,U]\|&\leq  \frac1{h_\Omega}\left(\|[F,B]\|+Const.\ell+Const.\|[U,V]\|\right)\\
&\leq Const.\left(\frac\varepsilon{L}+L+\|[U,V]\|\right)=:\epsilon(L).
\end{align}
Note that the constant $Const.$ in the definition of $\epsilon(L)$ is a universal constant.

So, we have the desired projections for $V_z$ for small enough arcs $\Omega_0, \Omega$ in $(e^{\pi i/12}, e^{\pi i/4})$ centered at $e^{\pi i/6}$. This then gives projections for $V$ with corresponding arcs $\tilde \Omega_0, \tilde \Omega$ centered at any $z_0$. 
This gives the desired projections since $\epsilon(L)$ and how small the arcs need to be do not depend on $z_0$. So, these satisfy (1)-(3) of Lemma 6.4 in \cite{herrera2024projection}.

Hence, by applying Lemma 6.4 and 6.6 of \cite{herrera2024projection} with $\tilde U = V, \tilde B = U$, we have proved the existence of commuting unitaries $U', V'$ with the desired structural properties, noting that the estimates in the statement of our lemma follow from (15) of \cite{herrera2024projection}:
\[\|V'-V\|\leq Const.L,\;\; \|U'-U\|\leq \frac{Const.}L\|[V,U]\|+2\epsilon(L),\]
\[\|V'-V\|\leq Const.L,\;\; \|U'-U\|\leq Const.\left(\frac1L\|[U,V]\|+\frac\varepsilon{L}+L\right).\]
Because we expect $\varepsilon > \|[U,V]\|$, we choose $L = \varepsilon^{1/2}$ to obtain the estimate in the statement of the lemma.

Note that the requirement that $L < L_0$ in Lemma 6.4 of \cite{herrera2024projection} will be satisfied because $L_0$ only depends the structure of the symmetries (e.g. $n$) and needs to be small enough that for any $L<L_0$, the estimate in (\ref{[F,U]}) holds. So, $\varepsilon$ can be chosen to be universally small enough so the construction always works.
\end{proof}
\begin{remark}
Because the bootstrap method worsens the asymptotic behavior of an estimate (usually by multiplying the exponent by $1/2$), we suspect that this method is likely not able to produce an asymptotically optimal result for almost commuting unitary matrices. So, we will not try to dig into the black box of the treatment of the almost representation of the sphere in \cite{hastings2010almost} to attempt to improve our above estimate.
\end{remark}

Our work in Example \ref{BottLocalizationEx} together with Lemma \ref{UtoHtoU} proves the following theorem.
\begin{thm}\label{UtoHtoUThm}
Let $U, V \in M_d(\C)$ be unitaries with $B(U,V)$ having signature zero.
Then there are commuting unitaries $U', V' \in M_d(\C)$ such that
\[\|U'-U\|, \|V'-V\| \leq Const.\|[U,V]\|^{1/8}.\]
for a universal constant $Const$ that does not depend on $d$.
Moreover, if $U, V$ have a phase symmetry as in one of the following cases then $U', V'$ can be chosen to also satisfy the same phase symmetry:
\begin{enumerate}[label=(\roman*)]
\item Suppose that $\phi$ is a weakly continuous conjugate-linear symmetry map on $\mathcal A$ with order $2$ such that $\phi(U) = \zeta_0 U$ and $\phi(V) = \eta_0 V$ (resp. $\phi_\ast(V) = \eta_0 V$). Then $U', V'$ can be chosen such that $\phi(U') = \zeta_0 U'$ and $\phi(V') = \eta_0 V'$ (resp. $\phi_\ast(V') = \eta_0 V'$).

\item Suppose that $\varphi$ is a weakly continuous linear symmetry map on $\mathcal A$ with order $n$ such that $\varphi(U) = \zeta_1 U$ for $\zeta_1$ having order $n$ and $\varphi(V) = \eta_1 V$ (resp. $\varphi_\ast(V) = \eta_1 V$). Then $U', V'$ can be chosen such that $\varphi(U') = \zeta_1 U'$ and $\varphi(V') = \eta_1 V'$ (resp. $\varphi_\ast(V') = \eta_1 V'$).

In this case, $Const.$ will depend on $n$.

\item Suppose that $\varphi, \phi$ are as in Cases (i) and (ii) above and satisfy $\phi \circ \varphi \circ \phi = \varphi^{-1}$. Then $U', V'$ can be chosen in a way that they satisfy the four phase symmetries of (i) and (ii). 

In this case, $Const.$ will depend on $n$.
\end{enumerate}
\end{thm}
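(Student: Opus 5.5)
The plan is to verify the hypothesis of Lemma \ref{UtoHtoU} with $\mathcal A = M_d(\C)$ and $\varepsilon = Const.\|[U,V]\|^{1/4}$, and then read off the estimate. Note first that we may assume $\|[U,V]\|$ is smaller than a universal threshold $\delta_0$. Otherwise $\|U'-U\|,\|V'-V\|\leq 2 \leq 2\delta_0^{-1/8}\|[U,V]\|^{1/8}$, which is achieved for instance by $U'=V'=I$. In the symmetric cases this is replaced by any pair of commuting unitaries with the required phase symmetries; existence of such a pair is the one point I would check separately. With this reduction, the statement is a direct combination of Example \ref{BottLocalizationEx} and Lemma \ref{UtoHtoU}.

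\textbf{Step 1.} Fix an arbitrary $z\in\bS$ and replace $V$ by $\overline{z}V$. This changes neither $\|[U,\overline z V]\|=\|[U,V]\|$ nor the symmetric structure, since multiplying by a scalar is compatible with each phase relation after adjusting $\eta$. By the rotation argument at the start of Example \ref{BottLocalizationEx}, $B(U,\overline z V)$ stays invertible along the path $e^{i\theta}V$, so its signature remains zero. Lemma \ref{index localization} then shows that the compression $B_E$ has signature zero, where $E$ is the spectral projection of $\overline zV$ on the relevant half circle. Example \ref{BottLocalizationEx} also shows that the $H_1,H_2,H_3$ of Lemma \ref{UtoHtoU} form a $Const.\|[U,V]\|$-almost representation of the sphere on $R(E)$. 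Since $B(H_1,H_2,H_3)=B_E$, Theorem \ref{NearbyExactRep} gives commuting $H_i'$ with $\|H_i'-H_i\|\leq Const.\|[U,V]\|^{1/4}$. All constants are independent of $z$ and $d$. A minor point is that the lemma uses an open arc while the example uses a closed arc. Because the signature is locally constant once $\|[U,V]\|$ is small, this should be harmless, since the spectral gap argument of Lemma \ref{index localization} applies equally to both.

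\textbf{Step 2 (symmetry, the main obstacle).} Lemma \ref{UtoHtoU} requires the $H_i'$ to be $S$-symmetric. The $H_i$ themselves are $S$-symmetric, because the anticommutator form of $B$ behaves well under (anti-)multiplicative maps. However, Theorem \ref{NearbyExactRep} as stated produces no symmetric $H_i'$. In the unstructured case $S=\emptyset$ this is vacuous. For cases (i)–(iii), I would first try averaging: replace the $H_i'$ by the average of their images under the finite group generated by the symmetries. This keeps the $H_i'$ close to the $H_i$, but it destroys exact commutation and the sphere relation. So instead I would invoke the symmetry bootstrap of \cite{herrera2024projection} on the sphere side, or the symmetric version of the Hastings--Loring construction (Lin's theorem with symmetries, as in \cite{loring2016almost}), to obtain symmetric commuting $H_i'$ with the same $\|[U,V]\|^{1/4}$ rate. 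Here I expect the constant to depend on $n$.

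One further point needs care. The phase-twisted relations $\varphi(U)=\zeta_1U$ make $H_2+iH_3$ transform by a phase rather than stay invariant. So the symmetry class passed to the sphere problem must be tracked, and checked against the hypotheses of Lemma \ref{UtoHtoU}(i)–(iii). In particular $h(\overline z V)$ and $E$ are only invariant when $\eta$ fixes the arc about $z$. I would handle this in one of two ways. Either restrict to those $z$ fixed by the symmetry, relying on Lemma 6.4 of \cite{herrera2024projection} to need only an orbit of arcs. Or absorb this into the lemma's hypotheses, which is how the lemma is stated.

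\textbf{Step 3.} With $\varepsilon=Const.\|[U,V]\|^{1/4}$, and $\varepsilon$ universally small since $\|[U,V]\|\leq\delta_0$, Lemma \ref{UtoHtoU} gives
\[\|V'-V\|\leq C\varepsilon^{1/2}=Const.\|[U,V]\|^{1/8},\qquad \|U'-U\|\leq C\varepsilon^{1/2}+C\frac{\|[U,V]\|}{\varepsilon^{1/2}}\leq Const.\|[U,V]\|^{1/8}.\]
The last inequality uses $\|[U,V]\|/\|[U,V]\|^{1/8}=\|[U,V]\|^{7/8}\leq\|[U,V]\|^{1/8}$ for $\|[U,V]\|\leq 2$. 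This yields the claimed dimension-independent estimate, with the symmetries preserved by the "moreover" part of Lemma \ref{UtoHtoU}.
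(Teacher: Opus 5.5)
Your Steps 1 and 3 are the paper's argument. Example \ref{BottLocalizationEx} with Theorem \ref{NearbyExactRep} gives, uniformly in $z$, commuting $H_i'$ with $\varepsilon=Const.\|[U,V]\|^{1/4}$, and Lemma \ref{UtoHtoU} turns this into the $\|[U,V]\|^{1/8}$ bound. The paper states the theorem as exactly this combination.

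Your Step 2, however, rests on a misreading and should be dropped.

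\begin{itemize}
\item The theorem involves no collection $S$ of linear symmetry maps, so Lemma \ref{UtoHtoU} is applied with $S=\emptyset$.
\item The phase symmetries (i)--(iii) are not members of $S$. They are delivered by the lemma's ``Moreover'' clause, ultimately Lemmas 6.4 and 6.6 of \cite{herrera2024projection}, which treat them at the level of arcs and spectral projections of $V$.
\item Hence the unsymmetrized $H_i'$ from Theorem \ref{NearbyExactRep} already satisfy the lemma's hypothesis, and nothing needs symmetrizing.
\end{itemize}

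This matters because the remedy you sketch would not work if it were needed.

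\begin{itemize}
\item As you yourself note, a phase symmetry sends the $H_i$ built at $z$ to those built at a different point. So asking for symmetric $H_i'$ is not even meaningful for generic $z$.
\item \cite{loring2016almost} concerns pairs of self-adjoint matrices, not the sphere, and gives no explicit rate.
\item A bootstrap on the sphere side would, by the paper's own remark, typically halve the exponent again, so the $1/8$ would be lost.
\end{itemize}

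The remark after the theorem says symmetric sphere estimates are what would be needed for additional genuine linear (e.g.\ anti-multiplicative) symmetries. They are not needed for the phase symmetries treated here.

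A minor slip: $x^{7/8}\le x^{1/8}$ requires $x\le 1$, not $x\le 2$. Your reduction to $\|[U,V]\|\le\delta_0$ covers this anyway.
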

\begin{remark}
Theorem \ref{unitary thm} stated in the introduction is a restatement of this theorem with the symmetries of (i) or (ii) because any symmetry map on $M_d(\C)$ can be written as the composition of conjugation by a unitary, taking the transpose, and taking the adjoint.  See Proposition 2.6 of \cite{herrera2024projection}.

The symmetry in case (iii) is a dihedral symmetry.
\end{remark}
\begin{remark}
This provides an asymptotic estimate for almost commuting unitary matrices for which the Bott index vanishes as well as real almost commuting unitaries and symplectic almost commuting unitaries. 

If we wish to extend our result to prove Lin's theorem for transpose-symmetric unitaries or self-dual unitaries (or anti-multiplicative symmetries more generally), then we would need to have an asymptotic result for the almost representation of the sphere with these linear symmetries. It appears plausible that one can derive these from the decompositions of \cite{loring2013almost}, which would provide asymptotic estimates for the results of \cite{loring2013almost}. 

Then our Theorem \ref{UtoHtoUThm} would immediately extend to the anti-multiplicative case because the only bottleneck for linear symmetries is the the passage from $H_i$ to $H_i'$.
\end{remark}

\begin{example}\label{Linear Symmetry Fail for Unitaries}
Although we can obtain conjugate-linear symmetries or phase symmetries using our symmetry bootstrap method for almost commuting unitaries, we cannot for multiplicative linear symmetries. In fact, there are some interesting examples to consider. 

For instance, Loring in Proposition 6.1 of \cite{loring1988k} showed that, one can let $n$ be a square and then the variant of Voiculescu's unitaries: $U_n\oplus U_n, V_n \oplus V_n^\ast$ are nearby commuting $U_n', V_n'\in M_{2n}(\C)$ with \[\|U_n'- U_n\oplus U_n\|, \|V_n'- V_n\oplus V_n^\ast\| \leq 4\pi \|[U_n, V_n]\|^{1/2}.\]
One can show that the Bott index vanishes for these direct sum unitaries, however one should notice that although both $U_n\oplus U_n, V_n\oplus V_n^\ast$ are block-diagonal, $U_n', V_n'$ cannot be. This is because the blocks would then be commuting unitaries that are nearby Voiculescu's unitaries.

We see from this example that if we consider the symmetry $\varphi(A) = \Gamma^{-1}A\Gamma$, where $\Gamma = I \oplus (-I)$, then then being block diagonal is equivalent to being $\varphi$-symmetric. So, both $U_n\oplus U_n, V_n\oplus V_n^\ast$ are $\varphi$-symmetric, but there are no nearby commuting unitaries that are also $\varphi$-symmetric. This is despite that fact that the signature of $B(U_n\oplus U_n, V_n\oplus V_n^\ast)$ vanishes so there is \emph{some} pair of nearby commuting unitaries.
\end{example}

\begin{example}
Another notable example we provide is when we have two almost commuting unitaries that satisfy a phase symmetry but the phase does not have the same order as the symmetry. More precisely, suppose we are considering almost commuting unitaries $U, V$ on $M_{4n}(\C)$. Let $\Gamma = (iJ) \oplus J$, where $J = \bp 0_n & 
I_n\\ -I_n & 0_n\ep$. Define $\varphi(A) = \Gamma^{-1}A\Gamma$ and we assume that $U, V$ are $\varphi$-antisymmetric.

Note that $J^2 = -1$, so $\Gamma^2 = I \oplus -I$. So, if $U$ is $\varphi$-antisymmetric, it is automatically symmetric with respect to conjugation by $\Gamma^2$. This ``hidden'' symmetry is due to the fact that the phase $\zeta = -1$ has order $2$ but conjugation by $\Gamma$ has order $4$.

Writing $U = \bp A&B \\ C&D\ep$ with $A, B, C, D \in M_{2n}(\C)$, one sees that $\varphi(U) = -U$ implies that $B = C = 0$ by applying $\varphi$ twice. So, $U = \bp A&0 \\ 0&D\ep$ and $J^{-1}AJ  = -A$, $J^{-1}DJ  = -D$. We see that $A, D$ have the structure  $\bp X & Y \\ Y & -X \ep$ for $X, Y \in M_n(\C)$.

So, consider the following pair of almost commuting unitaries which is a variant of the previous example:
\[U = 
\bp U_n &0_n&&\\
     0_n&-U_n&&\\
    &&U_n&0_n\\
    &&0_n&-U_n
     \ep, \;
     V = 
\bp V_n &0_n&&\\
     0_n&-V_n&&\\
    &&V_n^\ast&0_n\\
    &&0_n&-V_n^\ast
     \ep.\]
We view these as $2\times 2$ block matrices with blocks in $M_{2n}(\C)$. The Bott invariants for the first and for the second blocks separately do not vanish, but the index for $U, V$ vanishes. 
So, there are nearby commuting unitaries $U', V'$, but they cannot be chosen to be symmetric with respect to conjugation by $\Gamma^2$. So, there are no nearby commuting unitaries $U', V'$ that are $\varphi$-antisymmetric.

This example shows the importance of the phase having the same order as the symmetry for almost commuting unitary matrices in (ii) of Lemma \ref{UtoHtoU} and analogous theorems.
\end{example}

\section{Almost Anti-commuting Self-Adjoint Operators}

\begin{example}\label{2x2structure}
Let $\mathcal N = \bp A & B \\ C & D\ep$,  $\mathcal W = \bp E & F \\ G & H \ep \in M_{2}(\mathcal A)$ with $\mathcal W$ unitary. Then
\begin{align*}
{\mathcal W}^{-1}&{\mathcal N}{\mathcal W} = \bp E^\ast & G^\ast \\ F^\ast & H^\ast \ep\bp AE+BG & AF+BH \\ CE+DG & CF+DH\ep \\
&= \bp 
E^\ast A E+E^\ast BG+G^\ast CE+G^\ast DG&
E^\ast AF+E^\ast BH+G^\ast CF+G^\ast DH\\
F^\ast AE+F^\ast BG+H^\ast CE+H^\ast DG&
F^\ast AF+F^\ast BH+H^\ast CF+H^\ast DH\ep.
\end{align*}

We now list several examples of $\mathcal W$ and the effect of conjugating:
\begin{enumerate}[label=(\roman*)]
\item If $\mathcal W = \bp I & 0 \\ 0 & \alpha I\ep$ with $\alpha \in \C$ with $|\alpha|=1, \alpha \neq 1$ then
\[{\mathcal W}^{-1}{\mathcal N}{\mathcal W} = \bp 
A&
\alpha B\\
\overline{\alpha}C&
D\ep.
\]
So,
\[{\mathcal W}^{-1}{\mathcal N}{\mathcal W} = \mathcal N \;\; \Leftrightarrow\;\; B = C = 0 \;\; \Leftrightarrow\;\; \mathcal N = \bp A & 0 \\ 0 & D\ep.
\]
If $\alpha = -1$ then
\[{\mathcal W}^{-1}{\mathcal N}{\mathcal W} = -\mathcal N \;\; \Leftrightarrow\;\; A = D = 0 \;\; \Leftrightarrow\;\; \mathcal N = \bp 0 & B \\ C & 0\ep.
\]

\item If $\mathcal W = \bp 0 & \alpha I \\ I & 0\ep$ with $\alpha \in \C$ with $|\alpha|=1$ then
\[{\mathcal W}^{-1}{\mathcal N}{\mathcal W} = \bp 
D&
\alpha C\\
\overline{\alpha} B&
A\ep.
\]
So,
\[{\mathcal W}^{-1}{\mathcal N}{\mathcal W} = \mathcal N \;\; \Leftrightarrow\;\; D = A, C = \overline{\alpha} B \;\; \Leftrightarrow\;\; \mathcal N = \bp A & B \\ \overline{\alpha}B & A\ep.
\]
\[{\mathcal W}^{-1}{\mathcal N}{\mathcal W} = -\mathcal N \;\; \Leftrightarrow\;\; D = -A, C = -\overline{\alpha} B \;\; \Leftrightarrow\;\; \mathcal N = \bp A & B \\ -\overline{\alpha}B & -A\ep.
\]

\item If $\mathcal W = \frac{\sqrt2}2\bp I & -I \\ I  & I\ep$ then 
\[{\mathcal W}^{-1}{\mathcal N}{\mathcal W} = \frac12\bp 
A+B+C+D&
-A+B-C+D\\
-A-B+C+D&
A-B-C+D\ep.
\]
So, 
\[{\mathcal W}^{-1}{\mathcal N}{\mathcal W} = \mathcal N \; \Leftrightarrow\; -A+B+C+D=-A-B-C+D= 0 \; \Leftrightarrow\; C = -B, D=A.\]
\[{\mathcal W}^{-1}{\mathcal N}{\mathcal W} = \mathcal N \;\; \Leftrightarrow\;\; \mathcal N = \bp A & B \\ -B & A\ep.\]
\end{enumerate}

\end{example}

\begin{example}\label{anti-commuting}
Using the standard block operator trick, one can convert anti-commutator relations into commutator relations as follows.
Let $X, Y$ be self-adjoint. Then $\mathcal X = \bp X & 0 \\ 0 & -X\ep$ and $\mathcal Y = \bp 0 & Y \\ Y & 0\ep$ are self-adjoint. Moreover,
\[\mathcal X \mathcal Y = \bp 0& XY \\ -XY & 0\ep, \;\; \mathcal Y \mathcal X = \bp 0&-YX  \\ YX & 0\ep\]
so $\|XY+YX\| = \|\mathcal X \mathcal Y-\mathcal Y \mathcal X\|$ and $\|XY-YX\| = \|\mathcal X \mathcal Y+\mathcal Y \mathcal X\|$. This tells us that if we know certain results for commutators of structured operators then we also know them for anti-commutators and if we know certain results for anti-commutators of structured operators then we also know them for commutators as well.

Let $\mathcal N \in M_2(\mathcal A)$ viewed as a $2\times 2$ block matrix with blocks $N_{ij} \in \mathcal A$.
Let \begin{equation*}
\mathcal W_X = \bp I & 0 \\ 0 & - I\ep, \;\;\label{W_Y}\mathcal W_Y = \bp 0 & I \\ I & 0\ep.
\end{equation*}
These unitaries anti-commute so the symmetry maps $\varphi_X(\mathcal N) = \mathcal W_X^{-1}\mathcal N\mathcal W_X$, $\varphi_Y(\mathcal N) = \mathcal W_Y^{-1}\mathcal N\mathcal W_Y$ commute. These unitaries are their own inverses, so $\varphi_X$ and $\varphi_Y$ have order two.

Moreover by Example \ref{2x2structure}, for any self-adjoint $\mathcal X, \mathcal Y \in M_{2}(\mathcal A)$: 
\begin{align}\nonumber
\mathcal X = \bp X & 0 \\ 0 & -X\ep \;\;&\Leftrightarrow\;\; \varphi_X(\mathcal X)=\mathcal X,\; \varphi_Y(\mathcal X)=-\mathcal X,\\
\mathcal Y = \bp 0 & Y \\ Y & 0\ep \;\;&\Leftrightarrow\;\; \varphi_X(\mathcal Y)=-\mathcal Y,\; \varphi_Y(\mathcal Y)=\mathcal Y.
\label{anti-commuting characterization}
\end{align}

This provides a framework to apply the structured Lin's theorem for two self-adjoint operators with antisymmetries to $\mathcal X, \mathcal Y$. We can apply the Symmetry Bootstrap of \cite[Theorem 7.1]{herrera2024constructing} to $\mathcal X, \mathcal Y$ with $\varphi_Y$ then to $\mathcal Y, \mathcal X$ with $\varphi_X$ as we will do with block unitaries in the next section. However, we will use a simpler method using Lin's theorem with dihedral symmetries of \cite[Theorem 8.2]{herrera2024constructing}.
\end{example}

\begin{example}
Note that satisfying 
\[\varphi_X(\mathcal N) = \mathcal N^\ast, \;\; \varphi_Y(\mathcal N)=-\mathcal N^\ast.\]
is equivalent to satisfying the dihedral ($D_2$) symmetries
\[\varphi_1(\mathcal N)=-\mathcal N, \;\; \phi_1(\mathcal N)=\mathcal N\]
for $\varphi_1=\varphi_X\circ\varphi_Y$ linear multiplicative and $\phi_1=\varphi_{X\ast}$ conjugate-linear anti-multiplicative. Because $\varphi_1, \phi_1$ also commute and both have order two, they satisfy the dihedral relation $\phi_1\circ \varphi_1 \circ \phi_1 = \varphi_1^{-1}$.
\end{example}

We first need the following definitions from \cite{herrera2024projection}:
\begin{defn}
Let $\mathcal A$ be a von Neumann algebra and $S$ is a collection of weakly continuous symmetry maps on $\A$. We say that $(\mathcal A, S)$ has TR-rank 1 if any $S$-symmetric element of $\mathcal A$ can be approximated by invertible $S$-symmetric elements.

For $S$ containing linear symmetry maps, we say that $(\mathcal A, S)$ is admissible if $S$ either only contains linear multiplicative symmetry maps or $S$ also contains a single linear anti-multiplicative symmetry map that commutes with all other symmetry maps in $S$ (which are necessarily multiplicative).
\end{defn}
We extend this definition to include symmetries acting on block operators:
\begin{defn}
Suppose that $\varphi$ is a symmetry map on a $C^\ast$-algebra $\mathcal A$. Define $\varphi\otimes \operatorname{id}$ by
\[(\varphi\otimes \operatorname{id})(\bp A & B \\ C & D\ep) = \bp \varphi(A) & \varphi(B) \\ \varphi(C) & \varphi(D)\ep\] and
$\varphi\otimes T$ by
\[(\varphi\otimes T)(\bp A & B \\ C & D\ep) = \bp \varphi(A) & \varphi(C) \\ \varphi(B) & \varphi(D)\ep.\]
\end{defn}

\begin{remark}
If $\varphi$ is a multiplicative symmetry map on $\mathcal A$, then $\varphi\otimes \operatorname{id}$ is a multiplicative symmetry map on $M_2(\mathcal A)$. 
If $\varphi$ is an anti-multiplicative symmetry map on $\mathcal A$, then $\varphi\otimes T$ is an anti-multiplicative symmetry map on $M_2(\mathcal A)$.
\end{remark}

\begin{remark}
If $\varphi_1, \varphi_2, \varphi_3, \varphi_4$ commute then $\varphi_1 \otimes \operatorname{id}$, $\varphi_2 \otimes \operatorname{id}$, $\varphi_3 \otimes T$, $\varphi_3 \otimes T$ commute. 

The unitaries 
$\mathcal W_X, \mathcal W_Y$ are their own inverses and are $\varphi\otimes \operatorname{id}$-symmetric and $\varphi\otimes T$-symmetric for any $\R$-linear $\varphi$ map that is multiplicative or anti-multiplicative. So, the symmetry maps $\varphi_X$, $\varphi_Y$ commute with any $\varphi\otimes \operatorname{id}$, $\varphi\otimes T$.  
\end{remark}

\begin{defn}
We say that $(\mathcal A, S)$ has 2-TR-rank 1 if it has TR-rank 1 and if 
\[(M_2(\mathcal A), \{\varphi\otimes \operatorname{id}: \varphi\in S \mbox{ multiplicative}\}\cup \{\varphi\otimes T: \varphi\in S \mbox{ anti-multiplicative}\})\] has TR-rank 1.
\end{defn}

If we were to abuse notation and define $\varphi$ on $M_2(\mathcal A)$ to be $\varphi\otimes T$ when $\varphi$ is multiplicative and $\varphi\otimes \operatorname{id}$ when $\varphi$ is anti-multiplicative, the second condition for being  2-TR-rank 1 can be expressed by saying that $(M_2(\mathcal A), S)$ has TR-rank 1.

\begin{remark}
By \cite[Theorem 3.4]{herrera2024projection}, if $\mathcal A$ is finite dimensional then it has 2-TR-Rank 1.
\end{remark}
We now prove the stability of the relation $XY=-YX$, for $X=X^\ast, Y=Y^\ast$ with linear symmetries.
\begin{thm}\label{anti-commuting thm}
There exists a function $\epsilon(\delta)$ with $\lim_{\delta \to 0^+}\epsilon(\delta)=0$ with the following property.

Suppose that $\mathcal A$ is a von Neumann algebra and $S$ is an admissible collection of weakly continuous linear symmetry maps on $\A$ so that $(\mathcal A, S)$ has TR-rank 1.

Then for any $S$-symmetry self-adjoint contractions $X, Y$, there exist anti-commuting $S$-symmetry self-adjoint $X', Y'$ such that
\[\|X'-X\|, \|Y'-Y\| \leq \epsilon(\|XY+YX\|).\]
If every symmetry in $S$ is multiplicative then we can take $\epsilon(\delta) = Const.\delta^{1/8}$.
\end{thm}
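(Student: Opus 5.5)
We reduce to a structured commuting problem on $M_2(\mathcal A)$ via the block trick of Example \ref{anti-commuting}. Given $S$-symmetric self-adjoint contractions $X,Y$, set
\[\mathcal X = \bp X & 0 \\ 0 & -X\ep, \qquad \mathcal Y = \bp 0 & Y \\ Y & 0\ep.\]
These are self-adjoint contractions in $M_2(\mathcal A)$ with $\|[\mathcal X,\mathcal Y]\| = \|XY+YX\|$.

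On $M_2(\mathcal A)$ we use the collection
\[\tilde S = \{\varphi\otimes \operatorname{id}: \varphi\in S \mbox{ multiplicative}\}\cup\{\varphi\otimes T: \varphi \in S \mbox{ anti-multiplicative}\}.\]
Both $\mathcal X$ and $\mathcal Y$ are $\tilde S$-symmetric. For anti-multiplicative $\varphi$ this uses that $\varphi\otimes T$ swaps the off-diagonal blocks and that $\mathcal Y$ has equal off-diagonal entries. By (\ref{anti-commuting characterization}), the pair also satisfies the antisymmetry relations $\varphi_X(\mathcal X)=\mathcal X$, $\varphi_Y(\mathcal X)=-\mathcal X$, $\varphi_X(\mathcal Y)=-\mathcal Y$, $\varphi_Y(\mathcal Y)=\mathcal Y$. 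By the remarks above, $\varphi_X$ and $\varphi_Y$ commute with everything in $\tilde S$. Hence $\tilde S\cup\{\varphi_X,\varphi_Y\}$ is still admissible: we only add linear multiplicative maps, and they commute with the single anti-multiplicative map, if any.

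The key step is to produce commuting self-adjoint $\mathcal X',\mathcal Y'$ close to $\mathcal X,\mathcal Y$ that keep all these symmetries and antisymmetries. We recast the antisymmetries as the dihedral form of the preceding example. Equivalently, the normal $\mathcal N=\mathcal X+i\mathcal Y$ satisfies $\varphi_X(\mathcal N)=\mathcal N^\ast$ and $\varphi_Y(\mathcal N)=-\mathcal N^\ast$, which amounts to $\varphi_1(\mathcal N)=-\mathcal N$ and $\phi_1(\mathcal N)=\mathcal N$ with $\varphi_1=\varphi_X\circ\varphi_Y$ and $\phi_1=\varphi_{X\ast}$. We then invoke Lin's theorem with dihedral symmetries, \cite[Theorem 8.2]{herrera2024constructing}, on $(M_2(\mathcal A),\tilde S)$ together with these dihedral symmetries. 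Its hypothesis is that $(M_2(\mathcal A),\tilde S)$ has TR-rank 1; this is exactly the 2-TR-rank 1 condition, so the assumption should be read that way, and it holds automatically in finite dimensions. The output is commuting $\mathcal X',\mathcal Y'$ with the same symmetry pattern and $\|\mathcal X'-\mathcal X\|,\|\mathcal Y'-\mathcal Y\|\le \epsilon(\|[\mathcal X,\mathcal Y]\|)$. When $S$ is purely multiplicative, the symmetry bootstrap route gives the quantitative version. We start from the Kachkovskiy–Safarov estimate (\ref{KS}) and apply the bootstrap once for $\varphi_Y$ on $(\mathcal X,\mathcal Y)$ and once for $\varphi_X$ on $(\mathcal Y,\mathcal X)$. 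Each application halves the exponent, giving $\delta^{1/2}\to\delta^{1/4}\to\delta^{1/8}$. Checking that the second application preserves the symmetry gained in the first is the main obstacle, and it rests on the commutation of $\varphi_X,\varphi_Y$ with each other and with $\tilde S$. We may also have to shrink $\epsilon$ so that $\mathcal X',\mathcal Y'$ remain contractions, by applying a cutoff function, which preserves all the symmetries since they are $\ast$-preserving.

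Finally we decode. By (\ref{anti-commuting characterization}), $\mathcal X'=\bp X'&0\\0&-X'\ep$ and $\mathcal Y'=\bp 0&Y'\\Y'&0\ep$ for self-adjoint $X',Y'$. By Example \ref{anti-commuting}, $\|X'Y'+Y'X'\|=\|[\mathcal X',\mathcal Y']\|=0$. The norm estimates pass to the blocks. Each $X'$ and $Y'$ is $S$-symmetric because $\mathcal X'$ and $\mathcal Y'$ are $\tilde S$-symmetric and the blocks are read off entrywise; in the anti-multiplicative case $\varphi\otimes T$ fixes the diagonal entries and swaps two equal off-diagonal entries. This gives the theorem, with $\epsilon(\delta)=Const.\,\delta^{1/8}$ in the multiplicative case.
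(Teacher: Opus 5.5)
Your reduction and qualitative argument match the paper's:
\begin{itemize}
\item the block operators $\mathcal X,\mathcal Y$;
\item the almost normal $\mathcal N=\mathcal X+i\mathcal Y$, whose antisymmetries under $\varphi_X,\varphi_Y$ are recast as the $D_2$ pair $\varphi_1(\mathcal N)=-\mathcal N$, $\phi_1(\mathcal N)=\mathcal N$;
\item one application of the dihedral Lin theorem (Theorem 8.2 of the prior work);
\item decoding through (\ref{anti-commuting characterization}) after taking real and imaginary parts of $\mathcal N'$.
\end{itemize}

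\textbf{Where you differ: the rate.} The paper's proof gets both the existence of $\epsilon(\delta)$ and, implicitly, the $\delta^{1/8}$ rate from that single dihedral theorem. You instead derive the rate from two successive bootstraps starting from Kachkovskiy--Safarov: first $\varphi_Y$ on $(\mathcal X,\mathcal Y)$, then $\varphi_X$ on $(\mathcal Y,\mathcal X)$. This is the alternative the paper mentions in Example \ref{anti-commuting} but does not use, and it mirrors the proof of Theorem \ref{Up to Phase Bootstrap}. Your route makes the exponent bookkeeping $1/2\to 1/4\to 1/8$ explicit.

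\textbf{An input you did not state.} The first bootstrap needs a $\tilde S$-symmetric commuting starting pair, and (\ref{KS}) as quoted for $M_n(\C)$ does not supply one. You can get it by applying Kachkovskiy--Safarov inside the fixed-point algebra $M_2(\mathcal A)^{\tilde S}=M_2(\mathcal A^S)$. This is a von Neumann algebra, hence of real rank zero. For multiplicative $S$, every element of it is a limit of invertibles, because $\mathcal A^S$ has that property by TR-rank 1 and the property passes to $2\times 2$ matrices.

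The paper's single dihedral call avoids this step and the cross-pass symmetry bookkeeping you flag. It also treats the anti-multiplicative case on the same footing as the multiplicative one.

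Your observation that the hypothesis actually used is TR-rank 1 of $(M_2(\mathcal A),\tilde S)$ is fair. That is the 2-TR-rank 1 condition the paper defines just before the theorem, and in the purely multiplicative case it follows from the stated hypothesis.

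Finally, the contractive cutoff is not required by the statement. If you use it, it preserves the $-1$ phase symmetries because the cutoff function $t\mapsto\max(-1,\min(1,t))$ is odd, not merely because the symmetry maps are $\ast$-preserving.
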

\begin{proof}
Consider the self-adjoint operators $\mathcal X, \mathcal Y\in M_2(\mathcal A)$ and linear multiplicative symmetry maps $\varphi_X, \varphi_Y$ on $M_2(\mathcal A)$ as defined in Example \ref{anti-commuting}.

Let $\mathcal N = \mathcal X+ i\mathcal Y = \bp X & iY \\ iY & -X\ep$. Then $\mathcal N$ is almost normal: 
\[\|[\mathcal N^\ast,\mathcal N]\| = 2\|[\mathcal X, \mathcal Y]\|=2\|XY+YX\|\]
and $\mathcal N$ satisfies the symmetries:
\[(\varphi\otimes \operatorname{id})(\mathcal N)=\mathcal N, \; \varphi\in S \mbox{ multiplicative},\] 
\[(\varphi\otimes T)(\mathcal N)=\mathcal N, \; \varphi\in S \mbox{ anti-multiplicative},\]
and
\[\varphi_X(\mathcal N) =\mathcal N^\ast, \;\; \varphi_Y(\mathcal N)=-\mathcal N^\ast.\]

The symmetry maps $\varphi\otimes \operatorname{id}, \varphi\otimes T, \varphi_X, \varphi_Y$ commute. Note that satisfying 
\[\varphi_X(\mathcal N) = \mathcal N^\ast, \;\; \varphi_Y(\mathcal N)=-\mathcal N^\ast.\]
is equivalent to satisfying the dihedral ($D_2$) symmetries
\[\varphi_1(\mathcal N)=-\mathcal N, \;\; \phi_1(\mathcal N)=\mathcal N\]
for $\varphi_1=\varphi_X\circ\varphi_Y$ linear multiplicative and $\phi_1=\varphi_{X\ast}$ conjugate-linear anti-multiplicative. Because $\varphi_1, \phi_1$ also commute and both have order two, they satisfy the dihedral relation $\phi_1\circ \varphi_1 \circ \phi_1 = \varphi_1^{-1}$.

So, applying \cite[Theorem 8.2]{herrera2024projection} to $\mathcal N$, we obtain a normal $\mathcal N' \in M_2(\mathcal A)$ that satisfies 
 \[(\varphi\otimes \operatorname{id})(\mathcal N')=\mathcal  N', \; \varphi\in S \mbox{ multiplicative},\] 
\[(\varphi\otimes T)(\mathcal N')=\mathcal N', \; \varphi\in S \mbox{ anti-multiplicative},\]
and
\[\varphi_X(\mathcal N') = (\mathcal N')^\ast, \;\; \varphi_Y(\mathcal N')=-(\mathcal N')^\ast.\]
Writing 
\[\mathcal X' = \Re(\mathcal N'), \;\; \mathcal Y' = \Im(\mathcal N'),\]
we see that there exist commuting $S$-symmetric self-adjoint $X', Y'\in \A$ such that 
\[\mathcal X' = \bp X' & 0 \\ 0 & -X'\ep, \;\; \mathcal Y' = \bp 0 & Y'\\ Y' & 0\ep,\]
In particular, $X', Y'\in \mathcal A$ are the desired anti-commuting operators.
\end{proof}

If we apply this result with the transpose, then we obtain the other result stated in the introduction.

\section{Unitaries that Almost Commute up to a Rational Phase}

We apply this same method for unitaries that almost commute up to a rational phase, $\omega$. Before that, we make some definitions and observations.

\begin{example}\label{shiftStrcture}
Let $\mathcal N \in M_m(\mathcal A)$ viewed as a $m\times m$ block matrix with blocks $N_{ij} \in \mathcal A$. Define $\mathcal W$ similarly with blocks $W_{ij}$ with $\mathcal W$ additionally being unitary.

\begin{enumerate}[label=(\roman*)]
\item Let $\omega\in\C$ have order $m$  and 
\[\mathcal W_\omega = \diag(1, \omega, \dots, 
\omega^{m-1}) \otimes I_{m} = \bp 
I &         & & \\
  & \omega I &  &\\
  &        & \ddots & \\
  &           &        & \omega^{m-1}I
\ep.\] 
Then 
$\mathcal W_\omega^{-1}{\mathcal N}\mathcal W_\omega$ has its $ij$th block equal to $\omega^{j-i}N_{ij}$. In particular, $\mathcal W_\omega^{-1}{\mathcal N}{\mathcal W_\omega}={\mathcal N}$ if and only ${\mathcal N}$ is block diagonal:
\[\mathcal N  = \bp 
N_{1,1} &         & & \\
  & N_{2,2} &  &\\
  &        & \ddots & \\
  &           &        & N_{m,m}
\ep\]
and $\mathcal W_\omega^{-1}{\mathcal N}{\mathcal W_\omega} = \omega{\mathcal N}$ if and only if all the blocks of $\mathcal N$ are zero except those directly above the diagonal (where the $m,1$ entry is viewed as being ``above'' the $1,1$ entry due to cyclic entry labeling):
\[\mathcal N = \bp 
0 & N_{1,2}  &         & &          \\
 & 0 &   N_{2,3}      &&            \\
  &  & 0   & \ddots&            \\
  &   &   & \ddots   && N_{m-1,m}   \\
N_{m,1}  &   &        &&&   0 \\
\ep.\]

\item Let $\zeta$ have order dividing $m$ and let the unitary matrix $\mathcal W_m$ be the block bilateral forward shift operator:
\[\mathcal W_m = \bp 
0 &   &         &          &I \\
I & 0 &         &          &  \\
  & I & 0       &          &  \\
  &   & \ddots  & \ddots   &  \\  
  &   &         &     I    &0 \\
\ep.\]
Then $\mathcal W_m^{-1}\mathcal N \mathcal W_m$ has $ij$th block equal to $N_{i+1,j+1}$ so that conjugating by $\mathcal W_m$ shifts the blocks in each bilateral diagonal upward along that diagonal cyclically.

Consequently, $\mathcal W_m^{-1}\mathcal N \mathcal W_m=\mathcal N$ if and only if all the blocks on each bilateral diagonal of $\mathcal N$ are identical. Likewise, $\mathcal W_m^{-1}\mathcal N \mathcal W_m=\zeta\mathcal N$ if and only if the entries satisfy $N_{i+1,j+1}=\zeta N_{i,j}$.
\end{enumerate}
\end{example}

\begin{example}\label{commute up to phase block translation}
Let $U, V$ be unitary matrices in $M_n(\C)$. Define
\begin{equation}\label{unitaryBlock}
\mathcal U = \bp 
U &         & & & \\
  & \omega U &  & & \\
  &        & \ddots & & \\
    &           &        &  \omega^{m-2}U &\\
  &           &        &  & \omega^{m-1}U
\ep, \;\;\; 
\mathcal V = \bp 
0 & V  &         & &          \\
 & 0 &   V      &&            \\
  &  & 0   & \ddots&            \\
  &   &   & \ddots   && V   \\
V  &   &        &&&   0 \\
\ep.
\end{equation}
Then
\[\mathcal U \mathcal V = 
\bp 
0 & UV  &         & &          \\
 & 0 &  \omega UV      &&            \\
  &  & 0   & \ddots&            \\
  &   &   & \ddots   && \omega^{m-2}UV   \\
\omega^{m-1}UV  &   &        &&&   0 \\
\ep.\]
\[\mathcal V\mathcal U = \bp 
0 &\omega VU  &         & &          \\
 & 0 &   \omega^2 VU      &&            \\
  &  & 0   & \ddots&            \\
  &   &   & \ddots   && \omega^{m-1}VU   \\
VU  &   &        &&&   0 \\
\ep.\]
So, $\|\mathcal U\mathcal V -\mathcal V\mathcal U\| = \|UV - \omega VU\|$ and $\|\omega \mathcal U\mathcal V -\mathcal V\mathcal U\| = \|UV - VU\|$.

Moreover, $\mathcal U^{-1}\mathcal V^{-1}\mathcal U\mathcal V = (\mathcal V\mathcal U)^{-1}(\mathcal U\mathcal V)$ is block diagonal with blocks $\omega^{-1} U^{-1}V^{-1}UV$.
So, by \cite{exel1991invariants}, for $\omega^{-1} U^{-1}V^{-1}UV$ close enough to $I$ and if 
\[\Tr[\log(\mathcal U^{-1}\mathcal V^{-1}\mathcal U\mathcal V)]=m\Tr[\log(\omega^{-1} U^{-1}V^{-1}UV)]=0\] then $\mathcal U, \mathcal V$ are nearby \emph{some} commuting unitaries. 

Note that the formula,
\[\Tr[\log(\omega^{-1} U^{-1}V^{-1}UV)]=0\]
is formally equal to the normalized trace version of the Exel trace formula:
\[\frac{1}{2\pi i}\tau(\log(UVU^{-1}V^{-1}))=\theta\]
for $\theta \in (-1/2, 1/2)$ where $\omega = e^{2\pi i \theta}$ as in \cites{hua2015rotation}. Like that of \cite{pedersen1998stability}, it does not require the use of different branches of the logarithm for certain values of $\theta$ due to the phase $\omega$ being inside the logarithm as in \cite[Theorem 4.14]{pedersen1998stability} or the case of $\theta = 1/2$ in \cite[Theorem 5.4]{hua2015rotation}. 
\end{example}

\begin{example}
Let $\mathcal W_\omega$ and $\mathcal W_m$ be as in Example \ref{shiftStrcture}. 
Let $\varphi_\omega(\mathcal N) = \mathcal W_\omega^{-1}\mathcal N\mathcal W_\omega$ 
and $\varphi_m(\mathcal N) = \mathcal W_m^{-1}\mathcal N\mathcal W_m$.

A unitary $\mathcal U$ has the block structure in the first part of Equation (\ref{unitaryBlock}) if and only iff  $\varphi_\omega(\mathcal U)={\mathcal U}$ and $\varphi_m(\mathcal U)=\omega\mathcal U$.
A unitary $\mathcal V$ has the block structure in the second part of Equation (\ref{unitaryBlock}) if and only iff  $\varphi_\omega(\mathcal V)=\omega{\mathcal V}$ and $\varphi_m(\mathcal V)=\mathcal V$.

Likewise, $\mathcal W_m^{-1} \mathcal W_\omega \mathcal W_m = \omega \mathcal W_\omega$. So, $\mathcal W_\omega \mathcal W_m = \omega \mathcal W_m \mathcal W_\omega$. Because  $\mathcal W_\omega, \mathcal W_m$ commute up to a phase, their associated symmetry maps $\varphi_\omega, \varphi_m$ commute because they are defined by conjugation.

The symmetry maps $\varphi_\omega, \varphi_m$ also commute with any $\varphi \otimes \operatorname{id}_m$ linear multiplicative map that is extended to $\mathcal M_m(\mathcal A)$ by applying $\varphi$ to each entry of matrices in $M_m(\mathcal A)$.

\end{example}

We apply \cite[Theorem 7.6]{herrera2024projection} twice to obtain the following result which implies the dilation result stated in the introduction.
\begin{thm}\label{Up to Phase Bootstrap}
Let $\mathcal A$ be a von Neumann algebra and $S$ a collection of weakly continuous linear multiplicative symmetry maps on $\mathcal A$. Let $U, V \in \mathcal A$ be $S$-symmetric unitaries and $\omega\in \C$ have order $m$. 

Suppose that for the $S\otimes \operatorname{id}$-symmetric unitaries $\mathcal U, \mathcal V\in M_m(\mathcal A)$ defined by (\ref{unitaryBlock}), there exist commuting $S\otimes \operatorname{id}$-symmetric unitaries $\mathcal U', \mathcal V'\in M_m(\mathcal A)$ and
\[\varepsilon = \max(\|\mathcal U'-\mathcal U\|, \|\mathcal V'-\mathcal V\|).\]
Then there exist $S$-symmetric unitaries $U'', V''\in\mathcal A$ such that
\[U''V'' = \omega V''U'',\]
\[\|U''-U\|, \|V''-V\| \leq C_m\varepsilon^{1/4}.\]
\end{thm}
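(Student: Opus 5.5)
The plan is to use the Symmetry Bootstrap, \cite[Theorem 7.6]{herrera2024projection}, twice. The first application imposes the phase symmetry coming from $\varphi_m$. The second imposes the one coming from $\varphi_\omega$. Once both structures are present, the block form of Example \ref{shiftStrcture} lets us read off $U'', V''$.

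\textbf{First bootstrap.} The collection of symmetries on $M_m(\mathcal A)$ is $S\otimes\operatorname{id}$ together with $\varphi_\omega,\varphi_m$, and these all commute by the preceding example. The unitaries $\mathcal U,\mathcal V$ satisfy
\[\varphi_\omega(\mathcal U)=\mathcal U,\quad \varphi_m(\mathcal U)=\omega\mathcal U,\quad \varphi_\omega(\mathcal V)=\omega\mathcal V,\quad \varphi_m(\mathcal V)=\mathcal V.\]
We apply \cite[Theorem 7.6]{herrera2024projection} to the commuting pair $\mathcal U',\mathcal V'$, with $\mathcal V$ in the role of the unitary whose local spectral projections are built and $\varphi_m$ as the order-$m$ phase symmetry. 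The phase $\omega$ of $\mathcal U$ has order exactly $m$, matching the order of $\varphi_m$, as case (ii) of Lemma \ref{UtoHtoU} requires.

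This yields commuting unitaries $\mathcal U_1,\mathcal V_1$ that are $S\otimes\operatorname{id}$-symmetric and satisfy $\varphi_m(\mathcal U_1)=\omega\mathcal U_1$, $\varphi_m(\mathcal V_1)=\mathcal V_1$. Their distance to $\mathcal U,\mathcal V$ is at most $C_m\varepsilon^{1/2}$, since the bootstrap halves the exponent, as in the choice $L=\varepsilon^{1/2}$ in Lemma \ref{UtoHtoU}. We also use that $\|[\mathcal U,\mathcal V]\|\le 2\varepsilon$, so the commutator term in the estimate is absorbed.

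\textbf{Second bootstrap.} The algebra is now $M_m(\mathcal A)$ with symmetry collection $S\otimes\operatorname{id}\cup\{\varphi_m\}$. We apply Theorem 7.6 again, this time with the symmetry $\varphi_\omega$, under which $\mathcal V$ has phase $\omega$ of order $m$ and $\mathcal U$ is invariant. The roles of the two unitaries are reversed: $\mathcal U$ now plays the localized unitary.

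To carry out this step I must check that Theorem 7.6 accepts the additional symmetry $\varphi_m$ as part of the ambient collection even though $\mathcal U_1$ is only phase-symmetric under it. The plan is to treat $\varphi_m$-phase-symmetry as a symmetry of the pair, which is legitimate because $\varphi_m$ and $\varphi_\omega$ commute, so the bootstrap's averaging over $\varphi_\omega$ preserves the $\varphi_m$-structure. The output is commuting $\mathcal U_2,\mathcal V_2$ with all four phase relations. Their distance to $\mathcal U_1,\mathcal V_1$, and hence to $\mathcal U,\mathcal V$, is at most $C_m\varepsilon^{1/4}$.

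\textbf{Reading off $U'',V''$.} By Example \ref{shiftStrcture}, the conditions $\varphi_\omega(\mathcal U_2)=\mathcal U_2$ and $\varphi_m(\mathcal U_2)=\omega\mathcal U_2$ force
\[\mathcal U_2=\diag(U'',\omega U'',\dots,\omega^{m-1}U'').\]
Likewise $\varphi_\omega(\mathcal V_2)=\omega\mathcal V_2$ and $\varphi_m(\mathcal V_2)=\mathcal V_2$ force $\mathcal V_2$ to be the block cyclic shift with all blocks equal to some $V''$. These $U'',V''$ are unitary and $S$-symmetric. Comparing the $(1,2)$ blocks of $\mathcal U_2\mathcal V_2=\mathcal V_2\mathcal U_2$ gives $U''V''=\omega V''U''$, by the computation in Example \ref{commute up to phase block translation}. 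Since the blocks of a block operator are dominated in norm by the whole operator, $\|U''-U\|\le\|\mathcal U_2-\mathcal U\|\le C_m\varepsilon^{1/4}$, and similarly $\|V''-V\|\le C_m\varepsilon^{1/4}$.

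\textbf{Main obstacle.} The hardest step is verifying the hypotheses of \cite[Theorem 7.6]{herrera2024projection} in the second application. In particular, I must check that the projections constructed from the spectral localization of $\mathcal U_1$ can be chosen compatible with the $\varphi_m$-phase-symmetry. I would attempt this via case (ii) of Lemma \ref{UtoHtoU} applied to the combined collection, using commutation of $\varphi_\omega$ and $\varphi_m$.
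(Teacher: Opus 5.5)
Your architecture matches the paper's: two applications of \cite[Theorem 7.6]{herrera2024projection} in case (ii), first imposing $\varphi_m$ and then $\varphi_\omega$, with $\varepsilon\to\varepsilon^{1/2}\to\varepsilon^{1/4}$, and then reading off $U'',V''$ from the block structure. That last step is correct as written. The gap is the step you call the main obstacle and leave open. With your choice of roles it is a real obstruction, not a routine check.

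In your second application the ambient collection contains $\varphi_m$, but the localized unitary is $\mathcal U$, which is only phase-symmetric under $\varphi_m$. Since $\varphi_m$ is a $\ast$-automorphism, $\varphi_m(E_\Omega(\mathcal U))=E_{\overline{\omega}\Omega}(\mathcal U)$. For a short arc $\Omega$ this is orthogonal to $E_\Omega(\mathcal U)$, so no nonzero projection below $E_\Omega(\mathcal U)$ can be $\varphi_m$-symmetric. Keeping the $\varphi_m$-structure would then need a family of local projections for $\mathcal U$ that is both $\varphi_\omega$-symmetric and $\varphi_m$-equivariant. That amounts to imposing two order-$m$ phase symmetries at once, which Theorem 7.6 does not provide. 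Your remark that ``averaging over $\varphi_\omega$ preserves the $\varphi_m$-structure'' relies on an internal mechanism of the black box that you have not established.

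The missing idea is to choose, in each step, the localized unitary to be the one that is genuinely symmetric (not merely phase-symmetric) under the ambient collection. The other unitary then inherits its phase relations, because it is the image of the original under a map that commutes with every ambient symmetry map. This forces the opposite role assignment from yours in both steps.

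\begin{itemize}
\item In the paper's first step the ambient collection is $S_0=S\otimes\operatorname{id}\cup\{\varphi_\omega\}$, and $\mathcal U$, which is $\varphi_\omega$-fixed, plays the localized role. Thus $\mathcal U_0$ comes out unitary, $S_0$-symmetric, with $\varphi_m(\mathcal U_0)=\omega\mathcal U_0$. Meanwhile $\mathcal V_0$ is the image of $\mathcal V$ under a map commuting with $S_0$, so $\varphi_\omega(\mathcal V_0)=\omega\mathcal V_0$ is inherited from $\varphi_\omega(\mathcal V)=\omega\mathcal V$.
\item In the second step the order is reversed: the ambient collection is $S\otimes\operatorname{id}\cup\{\varphi_m\}$ and $\mathcal V$, which is $\varphi_m$-fixed, is localized. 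Then $\varphi_m(\mathcal U'')=\omega\mathcal U''$ is inherited in the same way.
\end{itemize}

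This choice also places the order-$m$ phase of the newly imposed symmetry on the localized unitary in each step. In both steps the compressed operator need not be unitary. The paper restores unitarity with \cite[Lemma 8.1]{herrera2024projection}, which preserves the symmetries and phase symmetries; your proposal omits this step as well.
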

\begin{proof}
We apply the symmetry bootstrap result \cite[Theorem 7.6]{herrera2024projection} with symmetry case (ii) to $\mathcal U, \mathcal V$ with linear symmetry maps $S_0=S\otimes \operatorname{id}\cup \{\varphi_\omega\}$ and linear symmetry map $\varphi_m$ with order $m$ because
$\varphi_m(\mathcal U)=\omega\mathcal U$ and $\varphi_m(\mathcal V)=\mathcal V$ . 

We obtain $\mathcal U_0, \mathcal V_0$ commuting with $\mathcal U_0$ unitary being $S_0$-symmetric, $\varphi_m(\mathcal U_0)=\omega\mathcal U_0$, $\varphi_m(\mathcal V_0)=\mathcal V_0$, and
\[\|\mathcal U_0-\mathcal U\|, \|\mathcal V_0-\mathcal V\| \leq Const_m\cdot \varepsilon^{1/2}.\]
Moreover, the map sending $\mathcal V$ to $\mathcal V_0$ commutes with each symmetry map in $S_0$. Therefore, 
$\varphi_\omega(\mathcal V_0)=\omega{\mathcal V_0}$ because $\varphi_\omega(\mathcal V)=\omega{\mathcal V}$ and $\mathcal V_0$ is $S$-symmetric because $\mathcal V$ is $S$-symmetric.
We then perturb $V_0$ to a unitary. By \cite[Lemma 8.1]{herrera2024projection}, it maintains its symmetries and phase symmetries.

We now apply the symmetry bootstrap result \cite[Theorem 7.6]{herrera2024projection} with symmetry case (ii) to $\mathcal V_0, \mathcal U_0$ (note the order) with linear symmetry maps $S''=S\otimes \operatorname{id}\cup \{\varphi_m\}$ and linear symmetry map $\varphi_\omega$ with order $m$ because
$\varphi_\omega(\mathcal V_0)=\omega\mathcal V_0$ and $\varphi_\omega(\mathcal U_0)=\mathcal U_0$.

We obtain $\mathcal V'', \mathcal U''$ commuting with $\mathcal V''$ unitary being $S''$-symmetric, $\varphi_\omega(\mathcal V'')=\omega\mathcal V''$, $\varphi_\omega(\mathcal U'')=\mathcal U''$, and
\[\|\mathcal U_0-\mathcal U\|, \|\mathcal V_0-\mathcal V\| \leq Const_m\cdot \left(Const_m\cdot \varepsilon^{1/2}\right)^{1/2}=Const.\, \varepsilon^{1/4}.\]

Moreover, the map sending $\mathcal U_0$ to $\mathcal U''$ commutes with each symmetry map in $S''$. Therefore, 
$\varphi_m(\mathcal U'')=\omega{\mathcal U''}$ because $\varphi_m(\mathcal U'')=\omega{\mathcal U''}$ and $\mathcal U''$ is $S$-symmetric because $\mathcal U_0$ is $S$-symmetric.
We then perturb $U_0$ to a unitary. By \cite[Lemma 8.1]{herrera2024projection}, it maintains its symmetries and phase symmetries.

Because $\varphi_\omega(\mathcal U'')={\mathcal U''}$ and $\varphi_m(\mathcal U'')=\omega\mathcal U''$, the unitary $\mathcal U''\in M_m(\mathcal A)$ has the block structure in the first part of Equation (\ref{unitaryBlock}). Let $U''\in\mathcal A$ be the $(1,1)$ block of $\mathcal U''$.

Because  $\varphi_\omega(\mathcal V'')=\omega{\mathcal V''}$ and $\varphi_m(\mathcal V'')=\mathcal V''$, the unitary $\mathcal V''\in M_m(\mathcal A)$ has the block structure in the second part of Equation (\ref{unitaryBlock}). Let $V''\in \mathcal A$ be the $(1,2)$ block of $\mathcal V''$.

By Example \ref{commute up to phase block translation}, these are the desired unitaries that commute up to a phase.
\end{proof}

This result applied to Theorem \ref{UtoHtoUThm} does not give us the exponent in (\ref{phaseKS}) because we did not make use of the one automatic bootstrap symmetry inherent in Theorem \ref{UtoHtoUThm}. We do that now. 
\begin{thm}\label{almost commuting unitaries up to a phase}
Let $U, V \in M_d(\C)$ be unitaries and $\omega$ a root of unit of order $m$. Let $\mathcal U, \mathcal V$ be  defined by (\ref{unitaryBlock}).

If $B(\mathcal U,\mathcal V)$ has signature zero, then there exist unitaries $U', V'\in\mathcal M_d(\C)$ such that
\[U'V' = \omega V'U',\]
\[\|U'-U\|, \|V'-V\| \leq C_m\|UV-\omega VU\|^{1/16}.\]
\end{thm}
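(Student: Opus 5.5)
We plan to pass to the block unitaries $\mathcal U, \mathcal V \in M_m(M_d(\C)) = M_{md}(\C)$ of (\ref{unitaryBlock}). By Example \ref{commute up to phase block translation}, $\|[\mathcal U,\mathcal V]\| = \|UV-\omega VU\|=:\delta$. By the Example following Example \ref{commute up to phase block translation}, $\varphi_\omega(\mathcal U)=\mathcal U$, $\varphi_\omega(\mathcal V)=\omega\mathcal V$, $\varphi_m(\mathcal U)=\omega\mathcal U$ and $\varphi_m(\mathcal V)=\mathcal V$. Here $\varphi_\omega,\varphi_m$ are commuting linear multiplicative symmetry maps of order $m$, and $\omega$ has order $m$.

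The key point is to use Theorem \ref{UtoHtoUThm} with its built-in phase symmetry (case (ii)) rather than a symmetry-free version. This saves one application of the bootstrap compared with simply combining Theorem \ref{UtoHtoUThm} with Theorem \ref{Up to Phase Bootstrap}.

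\textbf{Step 1.} Since $B(\mathcal U,\mathcal V)$ has signature zero, we apply Theorem \ref{UtoHtoUThm}(ii) in $\mathcal A=M_{md}(\C)$. We take $\varphi=\varphi_m$ of order $m$, with $\varphi_m(\mathcal U)=\omega\mathcal U$ ($\omega$ of order $m$) and $\varphi_m(\mathcal V)=1\cdot\mathcal V$. This gives commuting unitaries $\mathcal U_0,\mathcal V_0$ with $\varphi_m(\mathcal U_0)=\omega\mathcal U_0$, $\varphi_m(\mathcal V_0)=\mathcal V_0$, and
\[\|\mathcal U_0-\mathcal U\|,\|\mathcal V_0-\mathcal V\|\le C_m\delta^{1/8}.\]
Here $\varphi_\omega$ need not be preserved; we only need the one phase symmetry at this stage.

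\textbf{Step 2.} We then repeat the second half of the proof of Theorem \ref{Up to Phase Bootstrap}. Apply the bootstrap \cite[Theorem 7.6]{herrera2024projection}, case (ii), to the ordered pair $(\mathcal V_0,\mathcal U_0)$ with $S''=\{\varphi_m\}$ and phase symmetry $\varphi_\omega$ of order $m$.

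The obstacle is that $\mathcal V_0$ and $\mathcal U_0$ do not satisfy $\varphi_\omega$ exactly. We would handle this by first averaging. Replace $\mathcal V$ by the $\varphi_\omega$-twisted average $\frac1m\sum_k\omega^{-k}\varphi_\omega^k(\mathcal V_0)$ and $\mathcal U_0$ by $\frac1m\sum_k\varphi_\omega^k(\mathcal U_0)$, then take polar parts to restore unitarity, as in \cite[Lemma 8.1]{herrera2024projection}. These averages are $O(\delta^{1/8})$-close to the originals, since $\mathcal U,\mathcal V$ are exactly symmetric. They still satisfy the $\varphi_m$ phase relations, because $\varphi_m$ and $\varphi_\omega$ commute and the twisting phases are compatible ($\varphi_m\varphi_\omega=\varphi_\omega\varphi_m$). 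They almost commute with commutator $O(\delta^{1/8})$.

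At this point we feed the bootstrap the pair $\mathcal V_0,\mathcal U_0$ together with the commuting pair from Step 1 as the nearby commuting unitaries, at distance $\varepsilon=O(\delta^{1/8})$. Alternatively, and more cleanly, we apply the bootstrap to the original $(\mathcal V,\mathcal U)$, which have both symmetries exactly, using $(\mathcal V_0,\mathcal U_0)$ as the nearby commuting witness. That witness is exactly $\varphi_m$-structured, as the bootstrap's $S''$ hypothesis requires. The bootstrap then outputs commuting unitaries $\mathcal V'',\mathcal U''$ with $\varphi_\omega(\mathcal V'')=\omega\mathcal V''$ and $\varphi_\omega(\mathcal U'')=\mathcal U''$. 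The equivariance of the bootstrap with respect to $S''$ gives $\varphi_m(\mathcal V'')=\mathcal V''$ and $\varphi_m(\mathcal U'')=\omega\mathcal U''$, as in the proof of Theorem \ref{Up to Phase Bootstrap}. The distances satisfy
\[\|\mathcal U''-\mathcal U\|,\|\mathcal V''-\mathcal V\|\le C_m\varepsilon^{1/2}= C_m\delta^{1/16}.\]

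\textbf{Step 3.} By Example \ref{shiftStrcture}, the two symmetries force $\mathcal U''$ and $\mathcal V''$ to have the block forms of (\ref{unitaryBlock}) for some unitaries $U'$ (the $(1,1)$ block) and $V'$ (the $(1,2)$ block). Example \ref{commute up to phase block translation} then converts $[\mathcal U'',\mathcal V'']=0$ into $U'V'=\omega V'U'$. The block norms give $\|U'-U\|,\|V'-V\|\le C_m\delta^{1/16}$.

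The main thing to verify is that the bootstrap hypothesis only needs some nearby commuting pair respecting $S''$, not respecting the new phase symmetry. This is what makes the exponent $\tfrac18\cdot\tfrac12$ rather than $\tfrac18\cdot\tfrac14$.
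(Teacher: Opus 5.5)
Your proposal is correct and follows the paper's proof. You apply Theorem \ref{UtoHtoUThm}(ii) with $\varphi_m$ to get a commuting, $\varphi_m$-structured pair within $O(\|UV-\omega VU\|^{1/8})$, then run the second half of the proof of Theorem \ref{Up to Phase Bootstrap} (bootstrap with $S''=\{\varphi_m\}$ and phase symmetry $\varphi_\omega$), and finally read off the blocks, giving the exponent $\tfrac18\cdot\tfrac12=\tfrac1{16}$. The only difference is that the paper also asks Step 1 to carry $\varphi_\omega$ (via $S_0=\{\varphi_\omega\}$), whereas you rightly note the second bootstrap only needs a $\varphi_m$-structured witness for the exactly structured pair $(\mathcal V,\mathcal U)$; your averaging detour is likewise unnecessary and can be dropped.
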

\begin{proof}
We will use the same set up as in the proof of Theorem \ref{Up to Phase Bootstrap} except that we apply Theorem \ref{UtoHtoUThm} to $\mathcal U, \mathcal V$ because
$\varphi_m(\mathcal U)=\omega\mathcal U$ and $\varphi_m(\mathcal V)=\mathcal V$ and we use $S = \emptyset$ so $S_0=\{\varphi_\omega\}$.

We obtain $\mathcal U_0, \mathcal V_0$ commuting unitaries with $\mathcal U_0$ being $S_0$-symmetric, $\varphi_m(\mathcal U_0)=\omega\mathcal U_0$, $\varphi_m(\mathcal V_0)=\mathcal V_0$, and
\[\|\mathcal U_0-\mathcal U\|, \|\mathcal V_0-\mathcal V\|  \leq C_\omega\|[\mathcal U,\mathcal V]\|^{1/8}=C_\omega\|UV-\omega VU\|^{1/8},\]
Moreover, the map sending $\mathcal V$ to $\mathcal V_0$ commutes with each symmetry map in $S_0$. Therefore, 
$\varphi_\omega(\mathcal V_0)=\omega{\mathcal V_0}$ because $\varphi_\omega(\mathcal V)=\omega{\mathcal V}$.
We then perturb $V_0$ to a unitary. By \cite[Lemma 8.1]{herrera2024projection}, it maintains its symmetries and phase symmetries.

The proof then continues as the proof of Theorem \ref{Up to Phase Bootstrap}.
\end{proof}

\newpage

\vspace{1in}

\textbf{ACKNOWLEDGEMENTS}. The author would like to thank Eric A. Carlen for support during the writing of this paper prior to 2024. We would also like to thank Malte Gerhold for some feedback on known results and ideas behind Remark \ref{Infinite Dilation}.
This research was partially supported by NSF grant DMS-1764254.

\renewcommand{\biblistfont}{%
	\normalfont
	\normalsize
}

\phantomsection

\addcontentsline{toc}{chapter}{Bibliography}

\bibliography{almostcommuting.bib}

@article{choi1988almost,
  title={Almost commuting matrices need not be nearly commuting},
  author={Choi, Man Duen},
  journal={Proceedings of the American Mathematical Society},
  volume={102},
  number={3},
  pages={529--533},
  year={1988}
}

@article{davidson1985almost,
  title={Almost commuting {H}ermitian matrices},
  author={Davidson, Kenneth R},
  journal={Mathematica Scandinavica},
  volume={56},
  number={2},
  pages={222--240},
  year={1985},
  publisher={JSTOR}
}

@article{dor2025almost,
  title={On almost commuting unitary matrices},
  author={Dor-On, Adam and Hall, Lucas and Kachkovskiy, Ilya},
  journal={arXiv preprint arXiv:2510.03674},
  year={2025}
}

@article{eilers1999morphisms,
  title={Morphisms of extensions of {$C^\ast$}-algebras: pushing forward the {B}usby invariant},
  author={Eilers, S{\o}ren and Loring, Terry A and Pedersen, Gert K},
  journal={Advances in Mathematics},
  volume={147},
  number={1},
  pages={74--109},
  year={1999},
  publisher={Elsevier}
}

@article{enders2019almost,
  title={Almost commuting matrices, cohomology, and dimension},
  author={Enders, Dominic and Shulman, Tatiana},
  journal={arXiv preprint arXiv:1902.10451},
  year={2019}
}

@article{exel1991invariants,
  title={Invariants of almost commuting unitaries},
  author={Exel, Ruy and Loring, Terry A},
  journal={Journal of Functional Analysis},
  volume={95},
  pages={764--76},
  year={1991}
}

@article{exel1993soft,
  title={The soft torus and applications to almost commuting matrices},
  author={Exel, Ruy},
  journal={Pacific Journal of Mathematics},
  volume={160},
  number={2},
  pages={207--217},
  year={1993},
  publisher={Mathematical Sciences Publishers}
}

@article{exel1989almost,
  title={Almost commuting unitary matrices},
  author={Exel, Ruy and Loring, Terry A},
  journal={Proceedings of the American Mathematical Society},
  volume={106},
  number={4},
  pages={913--915},
  year={1989}
}

@article{friis1996almost,
  title={Almost commuting self-adjoint matrices-a short proof of {H}uaxin {L}in's theorem.},
  author={Friis, Peter and R{\o}rdam, Mikael},
  year={1996},
  publisher={Journal f{\"u}r die reine und angewandte Mathematik 479}
}

@article{gerhold2024dilation,
  title={Dilation distance and the stability of ergodic commutation relations},
  author={Gerhold, Malte and Shalit, Orr},
  journal={arXiv preprint arXiv:2406.05864},
  year={2024}
}

@article{gong1998almost,
  title={Almost multiplicative morphisms and almost commuting matrices},
  author={Gong, Guihua and Lin, Huaxin},
  journal={Journal of Operator Theory},
  pages={217--275},
  year={1998},
  publisher={JSTOR}
}

@book{loukas2014classical,
  title={Classical Fourier Analysis},
  author={Loukas Grafakos, Loukas Grafakos},
  year={2014},
  publisher={Springer}
}

@article{hastings2009making,
  title={Making almost commuting matrices commute},
  author={Hastings, Matthew B},
  journal={Communications in Mathematical Physics},
  volume={291},
  number={2},
  pages={321--345},
  year={2009},
  publisher={Springer}
}

@article{hastings2011making,
  title={Making Almost Commuting Matrices Commute},
  author={Hastings, Matthew B},
  journal={arXiv preprint arXiv:0808.2474},
  year={2011}
}

@article{hastings2010almost,
  title={Almost commuting matrices, localized {W}annier functions, and the quantum {H}all effect},
  author={Hastings, Matthew B and Loring, Terry A},
  journal={Journal of mathematical physics},
  volume={51},
  number={1},
  year={2010},
  publisher={AIP Publishing}
}

@article{hastings2011topological,
  title={Topological insulators and {$C^\ast$}-algebras: Theory and numerical practice},
  author={Hastings, Matthew B and Loring, Terry A},
  journal={Annals of Physics},
  volume={326},
  number={7},
  pages={1699--1759},
  year={2011},
  publisher={Elsevier}
}

@article{herrera2022constructing,
  title={Constructing Nearby Commuting Matrices for Reducible Representations of {$su(2)$} with an Application to {O}gata's Theorem},
  author={Herrera, David},
  journal={arXiv preprint arXiv:2212.06012},
  year={2022}
}

@article{herrera2020hastings,
  title={On {H}astings' approach to {L}in's Theorem for Almost Commuting Matrices},
  author={Herrera, David},
  journal={arXiv preprint arXiv:2011.11800},
  year={2020}
}

@article{herrera2024projection,
  title={A Projection Characterization and Symmetry Bootstrap for Elements of a von {N}eumann Algebra that are Nearby Commuting Elements},
  author={Herrera, David},
  journal={arXiv preprint arXiv:},
  year={2024}
}

@phdthesis{herrera2024constructing,
  title={Constructing nearby commuting matrices for ogata's theorem on macroscopic observables},
  author={Herrera, David},
  year={2024},
  school={Rutgers University-School of Graduate Studies},
  url={https://doi.org/doi:10.7282/t3-k837-yj42}
}

@article{kachkovskiy2016distance,
  title={Distance to normal elements in {$C^\ast$}-algebras of real rank zero},
  author={Kachkovskiy, Ilya and Safarov, Yuri},
  journal={Journal of the American Mathematical Society},
  volume={29},
  number={1},
  pages={61--80},
  year={2016}
}

@article{li2022vector,
  title={Vector-wise Joint Diagonalization of Almost Commuting Matrices},
  author={Li, Bowen and Lu, Jianfeng and Yu, Ziang},
  journal={arXiv preprint arXiv:2205.15519},
  year={2022}
}

@article{lin1996almost,
  title={Almost commuting selfadjoint matrices and applications},
  author={Lin, Huaxin},
  journal={Operator algebras and their applications},
  pages={193--233},
  year={1996},
  publisher={American Mathematical Society}
}

@article{lin2024almost,
  title={Almost commuting self-adjoint operators and measurements},
  author={Lin, Huaxin},
  journal={arXiv preprint arXiv:2401.04018},
  year={2024}
}

@article{lin1995almost,
  title={Almost commuting unitary elements in purely infinite simple C*-algebras},
  author={Lin, Huaxin},
  journal={Mathematische Annalen},
  volume={303},
  number={1},
  pages={599--616},
  year={1995},
  publisher={Springer}
}

@article{pedersen1998stability,
  title={Stability of anticommutation relations: an application of noncommutative CW-complexes},
  author={Pedersen, Gert K and Loring, Terry A and Eilers, S{\o}ren},
  year={1998},
  publisher={Walter de Gruyter GmbH \& Co. KG Berlin, Germany}
}

@article{loring1988k,
  title={K-theory and asymptotically commuting matrices},
  author={Loring, Terry A},
  journal={Canadian Journal of Mathematics},
  volume={40},
  number={1},
  pages={197--216},
  year={1988},
  publisher={Cambridge University Press}
}

@article{loring2015k,
  title={K-theory and pseudospectra for topological insulators},
  author={Loring, Terry A},
  journal={Annals of Physics},
  volume={356},
  pages={383--416},
  year={2015},
  publisher={Elsevier}
}

@book{loring1997lifting,
  title={Lifting solutions to perturbing problems in C*-algebras},
  author={Loring, Terry A},
  volume={8},
  year={1997},
  publisher={American Mathematical Soc.}
}

@article{loring2014quantitative,
  title={Quantitative {K}-theory related to spin {C}hern numbers},
  author={Loring, Terry A},
  journal={SIGMA. Symmetry, Integrability and Geometry: Methods and Applications},
  volume={10},
  pages={077},
  year={2014},
  publisher={SIGMA. Symmetry, Integrability and Geometry: Methods and Applications}
}

@article{loring1998matrices,
  title={When matrices commute},
  author={Loring, Terry A},
  journal={Mathematica Scandinavica},
  pages={305--319},
  year={1998},
  publisher={JSTOR}
}

@article{loring2016almost,
  title={Almost commuting self-adjoint matrices: the real and self-dual cases},
  author={Loring, Terry A and S{\o}rensen, Adam PW},
  journal={Reviews in Mathematical Physics},
  volume={28},
  number={07},
  pages={1650017},
  year={2016},
  publisher={World Scientific}
}

@article{loring2014almost,
  title={Almost commuting orthogonal matrices},
  author={Loring, Terry A and S{\o}rensen, Adam PW},
  journal={Journal of Mathematical Analysis and Applications},
  volume={420},
  number={2},
  pages={1051--1068},
  year={2014},
  publisher={Elsevier}
}

@article{loring2013almost,
  title={Almost commuting unitary matrices related to time reversal},
  author={Loring, Terry A and S{\o}rensen, Adam PW},
  journal={Communications in Mathematical Physics},
  volume={323},
  pages={859--887},
  year={2013},
  publisher={Springer}
}

@article{ogata2013approximating,
  title={Approximating macroscopic observables in quantum spin systems with commuting matrices},
  author={Ogata, Yoshiko},
  journal={Journal of Functional Analysis},
  volume={264},
  number={9},
  pages={2005--2033},
  year={2013},
  publisher={Elsevier}
}

@article{voiculescu1981remarks,
  title={Remarks on the singular extension in the {$C^\ast$}-algebra of the {H}eisenberg group},
  author={Voiculescu, Dan},
  journal={Journal of Operator Theory},
  pages={147--170},
  year={1981},
  publisher={JSTOR}
}

@article{voiculescu1983asymptotically,
  title={Asymptotically commuting finite rank unitary operators without commuting approximants},
  author={Voiculescu, Dan},
  journal={Acta Sci. Math.(Szeged)},
  volume={45},
  number={1-4},
  pages={429--431},
  year={1983}
}

@article{aleksandrov2016operator,
  title={Operator lipschitz functions},
  author={Aleksandrov, A. B. and Peller, V. V.},
  journal={Russian Mathematical Surveys},
  volume={71},
  number={4},
  pages={605},
  year={2016},
  publisher={IOP Publishing}
}

@article{hua2015rotation,
  title={Rotation algebras and the Exel trace formula},
  author={Hua, Jiajie and Lin, Huaxin},
  journal={Canadian Journal of Mathematics},
  volume={67},
  number={2},
  pages={404--423},
  year={2015},
  publisher={Cambridge University Press}
}

@article{hua2021stability,
  title={Stability of rotation relations in-algebras},
  author={Hua, Jiajie and Wang, Qingyun},
  journal={Canadian Journal of Mathematics},
  volume={73},
  number={4},
  pages={1171--1203},
  year={2021},
  publisher={Canadian Mathematical Society}
}

\Addresses

\end{document}